\documentclass[11pt, reqno]{amsart}

\usepackage[T1]{fontenc}

\usepackage{amsmath, amssymb, amsfonts, amsthm}
\usepackage{mathtools}      
\usepackage{physics}        
\usepackage{esint}
\usepackage{enumitem}
\usepackage{array}
\usepackage{float}
\usepackage{graphicx}
\usepackage[colorlinks]{hyperref}

\graphicspath{{./images/}}

\numberwithin{equation}{section}

\newcommand{\R}{\mathbb{R}}
\newcommand{\Z}{\mathbb{Z}}

\newcommand{\Zreg}{\mathcal{Z}_{\mathrm{reg}}}
\newcommand{\Zhat}{\widehat{\Zreg}}

\newcommand{\Haus}[1]{\mathcal{H}^{#1}}
\newcommand{\Leb}[1]{\mathcal{L}^{#1}}

\newcommand{\tgrad}{\overline{\nabla}}

\renewcommand{\d}{\,\mathrm{d}}

\providecommand{\given}{}
\DeclarePairedDelimiterX{\set}[1]\{\}{\renewcommand{\given}{\,\delimsize\vert\,}#1}

\DeclareMathOperator{\diam}{diam}
\DeclareMathOperator{\supp}{supp}
\DeclareMathOperator{\sgn}{sign}
\DeclareMathOperator{\TV}{TV}
\DeclareMathOperator*{\osc}{osc}
\DeclareMathOperator*{\vosc}{vosc}

\newcommand{\Osc}[1]{\osc_{#1}}
\newcommand{\Vosc}[1]{\vosc_{#1}}

\newtheorem{theorem}{Theorem}[section]
\newtheorem{lemma}[theorem]{Lemma}

\theoremstyle{definition}
\newtheorem{definition}[theorem]{Definition}

\theoremstyle{remark}

\title[Classical Analysis Counterpart of Viterbo's ABP Proof]{A Classical Analysis Counterpart of Viterbo's Symplectic Geometry Proof of ABP in the Plane}

\author[Maienshein]{Daniel Maienshein}
\address{Department of Mathematics, University of Pittsburgh, PA 15260, USA}
\email{dnm48@pitt.edu}

\author[Manfredi]{Juan J.~Manfredi}
\address{Department of Mathematics, University of Pittsburgh, PA 15260, USA}
\email{manfredi@pitt.edu}

\date{May 12, 2026}

\keywords{Alexandroff--Bakelman--Pucci estimate; symplectic geometry; generating functions; co-area formula; oscillation}
\thanks{The research was supported  by the Simons 
Collaboration Gift 962828}
\subjclass[2020]{35J15; 35B50; 53D12}

\begin{document}

\begin{abstract}
    We first provide a classical analysis proof of a version of the Alexandroff--Bakelman--Pucci inequality (ABP) for compactly supported $C^2$ functions in dimension $2$, inspired by the symplectic geometry proof method of Viterbo, which avoids convexity or contact sets. We then show how the proof may be modified to remove the compact support hypothesis and recover the usual statement of ABP, which includes a boundary term. We also discuss the possibility (and difficulties) of extending a pure classical analysis proof to dimension $3$ and above.
\end{abstract}

\maketitle

\section{Introduction}

Suppose $\Omega \subseteq \R^n$ and $a^{ij}, b^i, c \colon \Omega \to \R$ are measurable functions. Consider the uniformly elliptic linear operator
\[
    L u \coloneqq a^{ij}(x)\,D_{ij} u + b^i(x)\, D_i u + c(x)\, u,
\]
where $0 < \lambda \leq a^{ij}(x) \leq \Lambda$ for all $x \in \Omega$. The classical Alexandroff--Bakelman--Pucci (ABP) estimate \cite{cabre1995alexandroff, gilbarg1998elliptic} is the following weak maximum principle for strong supersolutions of the equation $Lu = f$.

\begin{theorem}\label{thm:ABP-classical}
    Suppose that $\Omega$ is bounded, $c \leq 0$, $f \in L^n(\Omega)$,  $b_i\in L^\infty(\Omega)$ for each $i$, and $\left(\sum_{i}b_i(x)^2\right)^{1/2} \leq B$ for all $x \in \Omega$. If  $u \in C^0(\overline{\Omega}) \cap W^{2,n}_{\mathrm{loc}}(\Omega)$ satisfies $Lu \geq f$ in $\Omega$, then
    \begin{equation*}\label{eq:ABP-classical-PDE}
        \sup_\Omega u \leq \sup_{\partial\Omega} u + C\, \diam(\Omega)\, \norm{f}_{L^n(\Omega)},
    \end{equation*}
    where $C = C(n, \lambda, B \diam(\Omega))$.
\end{theorem}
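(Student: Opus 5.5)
We prove Theorem~\ref{thm:ABP-classical} by the standard contact-set argument: first reduce to smooth functions, then apply the area formula to the gradient map on the upper contact set, and finally absorb the drift $b$ with a weight in gradient space.

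\textbf{Reductions.} Since $c \le 0$, on the set $\{u > 0\}$ we have $a^{ij}D_{ij}u + b^iD_iu \ge f - cu \ge f$. Replacing $u$ by $u - \sup_{\partial\Omega} u^+$, restricting to $\Omega^+ = \{u > 0\}$, and replacing $f$ by $-f^-$, we may assume $u \le 0$ on $\partial\Omega$, $c = 0$, and $f \le 0$. We prove the bound first for $u \in C^2(\Omega)\cap C^0(\overline\Omega)$. The $W^{2,n}_{\mathrm{loc}}$ case then follows by approximation, using that the area formula holds for $W^{2,n}$ maps, or alternatively by mollifying and controlling $\|L u_\varepsilon - Lu\|_{L^n}$ on compact subsets.

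\textbf{Gradient image argument.} Let $M = \sup_\Omega u > 0$ and $d = \diam\Omega$. Define the upper contact set
\[
\Gamma^+ = \{y \in \Omega : u(x) \le u(y) + p\cdot(x-y) \text{ for all } x\in\Omega, \text{ for some } p\}.
\]
Every slope $p$ with $|p| < M/d$ is attained as $Du(y)$ for some $y \in \Gamma^+$: lower the plane $x \mapsto M' + p\cdot(x-x_0)$ from above until it first touches the graph of $u$. The contact point cannot lie on the boundary, because there $u \le 0$ while the plane is still positive. Hence $B_{M/d}(0) \subseteq Du(\Gamma^+)$. On $\Gamma^+$ the Hessian satisfies $D^2u \le 0$, so the inequality $\det(A)\det(-D^2u) \le \big(\operatorname{tr}(-AD^2u)/n\big)^n$ together with $\det A \ge \lambda^n$ gives
\[
\det(-D^2u) \le \Big(\frac{|b|\,|Du| + f^-}{n\lambda}\Big)^n .
\]

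\textbf{Weighted area formula.} To handle the drift, take the weight $g(p) = (|p|^{n/(n-1)} + \mu^{n/(n-1)})^{-1}$ with $\mu = \|f^-\|_{L^n(\Gamma^+)}$, assuming first $\mu > 0$. The area formula gives
\[
\int_{B_{M/d}} g(p)\,dp \le \int_{\Gamma^+} g(Du)\,|\det D^2u|\,dx .
\]
By H\"older's inequality, $(|b||p| + f^-)^n \le (B^n + \mu^{-n}(f^-)^n)\,(|p|^{n/(n-1)} + \mu^{n/(n-1)})^{n-1}$. Combining this with the bound on $\det(-D^2u)$, the right-hand side is at most $(n\lambda)^{-n}\int_\Omega (B^n + \mu^{-n}(f^-)^n)$, which is bounded by $C\big(B^n|\Omega| + 1\big)$. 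The left-hand side is computed explicitly as $c_n \log\big(1 + (M/(d\mu))^{n/(n-1)}\big)$. Solving for $M$ gives $M \le C(n,\lambda,Bd)\, d\,\|f\|_{L^n}$, since $|\Omega| \le \omega_n d^n$. If $\mu = 0$, run the argument with $\mu$ replaced by $\varepsilon > 0$ and let $\varepsilon \to 0$.

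The main obstacle is the drift term. Without $b$ the argument is just the Alexandroff measure comparison, but with $b$ the gradient-space weight and the H\"older splitting are needed. The exponential dependence of the constant on $Bd$ comes from inverting the logarithm, and this accounts for the dependence $C(n,\lambda,B\diam\Omega)$ in the statement.
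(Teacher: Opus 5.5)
\textbf{Approach.} Your strategy is the standard Gilbarg--Trudinger proof (their Theorem~9.1), which is what the paper relies on. The paper does not prove Theorem~\ref{thm:ABP-classical} itself; it cites it and only recalls the contact-set/area-formula mechanism behind Lemma~\ref{lem:ABP-analytic}.

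\textbf{The gap: the weight exponent.} The drift step, as you wrote it, fails for $n \geq 3$. The weight must be
\[
    g(p) = \bigl(\abs{p}^{n/(n-1)} + \mu^{n/(n-1)}\bigr)^{1-n},
\]
not the same expression raised to the power $-1$; the two coincide only when $n = 2$. Your H\"older splitting produces precisely the factor $\bigl(\abs{Du}^{n/(n-1)} + \mu^{n/(n-1)}\bigr)^{n-1}$. With exponent $-1$ you are therefore left with $\bigl(\abs{Du}^{n/(n-1)} + \mu^{n/(n-1)}\bigr)^{n-2}$, which is unbounded on $\Gamma^+$. Meanwhile $\int_{B_R} g \d p$ grows like $R^{n(n-2)/(n-1)}$ instead of logarithmically. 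Even after restricting to $\Gamma^+ \cap \set{\abs{Du} < R}$, both sides scale the same way in $R$, so the inequality no longer bounds $R = M/d$.

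With the exponent $1-n$ you get the pointwise bound $g(Du)\det(-D^2 u) \leq (n\lambda)^{-n}\bigl(B^n + \mu^{-n}(f^-)^n\bigr)$ on $\Gamma^+$. The left side is then an exact logarithm only for $n = 2$. In general, $(s+t)^{n-1} \leq 2^{n-2}(s^{n-1} + t^{n-1})$ gives $\int_{B_R} g \d p \geq 2^{2-n}\omega_n \log\bigl(1 + (R/\mu)^n\bigr)$, which suffices to solve for $M$.

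\textbf{Smaller points.}

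First, keep the right-hand integral over $\Gamma^+$. With $\mu = \norm{f^-}_{L^n(\Gamma^+)}$ one has $\int_{\Gamma^+} \mu^{-n}(f^-)^n = 1$, whereas $\int_{\Omega} \mu^{-n}(f^-)^n$ can be arbitrarily large. Alternatively, define $\mu$ over $\Omega$ from the start.

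Second, your normalization $u \mapsto u - \sup_{\partial\Omega} u^+$ proves $\sup_\Omega u \leq \sup_{\partial\Omega} u^+ + C\diam(\Omega)\norm{f}_{L^n(\Omega)}$. This is weaker than the displayed inequality when $\sup_{\partial\Omega} u < 0$. You should say this explicitly, because for $c \not\equiv 0$ the version without ${}^+$ is false. Take $L = \Delta - 1$ on the unit ball, $f = 0$, and $u = -1 + \epsilon(1 - \abs{x}^2)$ with $0 < \epsilon \leq 1/(2n+1)$. Then $Lu \geq 0$, but $\sup_\Omega u = -1 + \epsilon > -1 = \sup_{\partial\Omega} u$.

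Third, for the passage to $W^{2,n}_{\mathrm{loc}}$, use the mollification route. Once $c$ has been dropped on $\set{u > 0}$, the coefficients are bounded, so $a^{ij}D_{ij}(u_\varepsilon - u) + b^i D_i(u_\varepsilon - u) \to 0$ in $L^n$ on compact subdomains. Invoking the area formula directly for the gradient of a $W^{2,n}$ function is delicate. Here $Du \in W^{1,n}_{\mathrm{loc}}$ need not be continuous, and Lusin's condition (N) is not automatic in this borderline class.
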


\noindent The heart of the proof of Theorem~\ref{thm:ABP-classical} is the following lemma \cite[Lemma 9.2]{gilbarg1998elliptic}, which we shall simply refer to as ABP.

\begin{lemma}\label{lem:ABP-analytic}
    If $u \in C^2(\Omega) \cap C^0(\overline{\Omega})$, then
    \begin{equation}\label{eq:ABP-classical-analytic}
        \bigl(\sup_\Omega u - \sup_{\partial\Omega} u\bigr)^n
        \leq C\, \diam(\Omega)^n \int_{\Omega} \abs{\det D^2 u(x)} \d x.
    \end{equation}
\end{lemma}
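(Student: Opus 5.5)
We prove Lemma~\ref{lem:ABP-analytic} by the classical upper contact set argument, which is the standard route in \cite{gilbarg1998elliptic}. The paper's later proof is meant to avoid this route. Set $M = \sup_\Omega u - \sup_{\partial\Omega} u$ and $d = \diam(\Omega)$. We may assume $M>0$. Translating and adding a constant, we may also assume $\sup_{\partial\Omega}u = 0$ and $0\in\Omega$. Since $u\in C^0(\overline\Omega)$ and $M>0$, the supremum is attained at some interior point $y\in\Omega$, with $u(y)=M$.

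The first step is the gradient image estimate. Let $\Gamma^+$ be the upper contact set, consisting of the points $x\in\Omega$ at which some affine function $\ell$ satisfies $\ell\ge u$ on $\Omega$ and $\ell(x)=u(x)$. We claim
\[
B_{M/d}(0)\subseteq Du(\Gamma^+).
\]
To see this, fix $p$ with $\abs{p}<M/d$ and consider $\ell_c(x) = c + p\cdot(x-y)$. Lower $c$ from $+\infty$ until the plane first touches the graph of $u$ over $\overline\Omega$. Since the plane lies above the graph, at the touching level we have $c\ge u(y)=M$. On $\partial\Omega$ we have $\ell_c \ge M - \abs{p}\,d > 0 \ge u$, so the contact cannot occur on the boundary. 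It therefore occurs at an interior point $x$, where $x\in\Gamma^+$ and $Du(x)=p$.

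The second step is the area formula. At points of $\Gamma^+$ the function $u$ is touched from above by an affine function, so $D^2u\le 0$ there, and $\abs{\det D^2u} = \det(-D^2u)$. The map $Du$ is $C^1$, so the area formula gives
\[
\omega_n (M/d)^n = \abs{B_{M/d}} \le \abs{Du(\Gamma^+)} \le \int_{\Gamma^+}\abs{\det D^2u}\d x \le \int_\Omega \abs{\det D^2u}\d x.
\]
This yields \eqref{eq:ABP-classical-analytic} with $C=\omega_n^{-1}$.

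The main point requiring care is the touching argument in the first step. One must check that the minimal $c$ exists, which follows from compactness of $\overline\Omega$ and continuity of $u$. One must also check that the contact point is interior, which uses the strict inequality $\abs{p}<M/d$. Finally, $\abs{x-y}\le d$ for $x\in\partial\Omega$ because $y\in\Omega$ and $d=\diam(\Omega)$. The change of variables inequality for the possibly non-injective $C^1$ map $Du$ is standard, since the area formula counts multiplicity and only gives an upper bound for the measure of the image. It holds even without any nondegeneracy of $D^2u$.
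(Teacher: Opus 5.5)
Your proof is correct and follows the standard two-step argument that the paper sketches for this lemma, citing \cite[Lemma 9.2]{gilbarg1998elliptic}: lowering affine functions with slope $\abs{p}<M/d$ onto the graph gives $B_{M/d}(0)\subseteq Du(\Gamma^+)$, and the area formula then gives $\Leb{n}(Du(\Gamma^+))\le\int_{\Gamma^+}\abs{\det D^2u}\d x$, so $C=\omega_n^{-1}$ works. Your one tacit assumption is that $\Omega$ is bounded, which you need so that the maximum is attained at an interior point; this is already among the standing hypotheses of Theorem~\ref{thm:ABP-classical}.
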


\noindent The standard proof of Lemma~\ref{lem:ABP-analytic} has two steps. First, one shows that
\(
    \Leb{n}(D u(\Gamma^+)) \leq \int_{\Gamma^+} \abs{\det D^2 u(x)} \d x
\)
using the area formula, where $\Leb{n}$ is $n$-dimensional Lebesgue measure and $\Gamma^+$ is the upper contact set of $u$\footnote{The set of points in $\Omega$ where $u$ coincides with its concave envelope.}. Then one finishes by estimating
\(
    (\sup_\Omega u - \sup_{\partial\Omega} u)^n \leq C\, \Leb{n}(D u(\Gamma^+)).
\)

In 2000, a new proof emerged when Viterbo established a version of ABP for compactly supported functions using tools from symplectic geometry and the theory of generating functions \cite{viterbo2000metric}. The precise statement comes from Corollary~3.9 of that paper, which we restate here.

\begin{theorem}[{\cite[Corollary 3.9]{viterbo2000metric}}]\label{thm:Viterbo-ABP}
    Let $K$ be a compact subset of $\R^n$. There exists a constant $\rho_n$ depending only on $n$ such that, for all $f \in C^2(\R^n)$ with $\supp f \subseteq K$,
    \[
        \bigl(\Osc{K} f\bigr)^n \leq \rho_n\, \Leb{n}(K) \int_{K} \abs{\det D^2 f(x)} \d x.
    \]
\end{theorem}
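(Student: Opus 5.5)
The plan is to run the two-step scheme behind Lemma~\ref{lem:ABP-analytic}, with two changes: use both the upper and the lower contact geometry, since $f$ vanishes outside $K$ and so $\sup f \ge 0 \ge \inf f$; and replace the diameter by $\Leb{n}(K)^{1/n}$. Set $M_+ = \max f$ and $M_- = -\min f$, so that $\Osc{K} f = M_+ + M_-$. It suffices to prove
\[
M_\pm^n \le \rho\, \Leb{n}(K) \int_{K_\pm} \abs{\det D^2 f}\d x ,
\]
where $K_+ = \{f>0\}$ and $K_- = \{f<0\}$ are disjoint. Adding the two $n$-th roots and using $(a+b)^n \le 2^{n-1}(a^n+b^n)$ then gives the statement. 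By symmetry ($f \mapsto -f$) I treat only $M_+$, and I assume $M_+>0$ (otherwise there is nothing to prove).

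First I show that the gradient image is small. For each slope $p$, consider the maximizers of $x \mapsto f(x) - p\cdot x$ over $\overline{E}$, where $E = K_+$. Call $p$ \emph{good} if such a maximum is attained at an interior point of $E$. At a good $p$ we have $Df(x) = p$ and $D^2 f(x) \le 0$ at that point. The area formula applied on the upper contact set $\Gamma_p$ then gives
\[
\Leb{n}(\text{good } p) \le \int_E \abs{\det D^2 f}\d x .
\]
This step is standard.

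Second, I bound the good set from below by $c\,M_+^n/\Leb{n}(K)$; this is the main obstacle. Let $x_0$ be a point where $f(x_0) = M_+$. Since $f = 0$ on $\partial E$, the slope $p$ is good as soon as $p\cdot(x_0 - x) < M_+$ for all $x \in \partial E$. Hence the good set contains $M_+$ times the polar body of $x_0 - E$. The difficulty is that this polar has volume comparable to $1/\Leb{n}(\operatorname{conv}E)$, not $1/\Leb{n}(E)$. This is exactly why the classical bound produces $\diam(\Omega)^n$, and why the corresponding claim cannot be deduced from Lemma~\ref{lem:ABP-analytic}.

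To overcome this, I would not fix $x_0$. Instead I would let the base point vary over the super-level sets $\{f > t\}$ and use the co-area formula in $t$. The idea is to estimate how many slopes are captured at each level. For each level $t$, the slopes of magnitude at most $(M_+ - t)/r(t)$ should be good relative to a "local" maximizer, where $r(t)$ is the radius of the ball of volume $\Leb{n}(\{f>t\})$. Integrating in $t$ should then replace $\diam$ by $\Leb{n}(K)^{1/n}$. For this I would first try an isoperimetric inequality on each level set, combined with an oscillation inequality along lines: in each direction, $f$ restricted to a line through $K$ must rise by $M_+$ over a set of one-dimensional measure controlled on average by $\Leb{n}(K)$.

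In dimension $2$ this averaging over lines seems tractable. The determinant $f_{11}f_{22}-f_{12}^2$ can be handled through the Jacobian of $Df$, and the one-dimensional slices close the argument. For $n \ge 3$ I expect this second step to be where a purely classical argument breaks down. There one would need Viterbo's generating-function input, or a suitable substitute for it, to obtain the lower bound on the good set.
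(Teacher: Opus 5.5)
\textbf{There is a genuine gap: Step~2 cannot be proved in any version.} Your reduction to $M_\pm$ and the area-formula bound on the good slopes are fine. But Step~2 carries the entire volume dependence, and it is not merely unproved. Every slope your scheme counts is $Df(x)$ at an interior maximum of $f-p\cdot x$, hence at a point with $D^2f(x)\le 0$. This holds whether you work relative to $\overline{E}$ with a fixed $x_0$, or relative to super-level sets $\{f>t\}$ around local maximizers. So the counted set always lies in $Df(\{D^2f\le 0,\ f>0\})$, and this set can be much smaller than $M_+^n/\Leb{n}(K)$.

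For a counterexample, take the cross $K=([-L,L]\times[-\epsilon,\epsilon])\cup([-\epsilon,\epsilon]\times[-L,L])$ with $\epsilon\ll L$, and
\[
f(x)=M\,g\bigl(\max\{\abs{x_1},\abs{x_2}\}\bigr)\,h\bigl(\min\{\abs{x_1},\abs{x_2}\}/\epsilon\bigr),\qquad g(s)=(1-s/L)_+^3,\quad h(u)=(1-u^2)_+^3,
\]
smoothed at a tiny scale along the diagonals of the central square.
\begin{itemize}
\item Off those diagonals, $f=Mg(\abs{x_1})h(x_2/\epsilon)$ up to swapping coordinates. Hence $f_{11}=Mg''h>0$ wherever $f>0$, and
\[
\det D^2f=\frac{M^2(1-s/L)^4}{L^2\epsilon^2}\bigl(6hh''-9h'^2\bigr),\qquad s=\abs{x_1}.
\]
Since $\int(6hh''-9h'^2)\,du=-15\int h'^2\,du$, this gives $\int_K\abs{\det D^2f}\gtrsim M^2/(L\epsilon)\approx M^2/\Leb{2}(K)$, as the theorem requires. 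But all of this mass sits where $D^2f$ is not negative semidefinite.
\item Across the diagonal at $(a,a)$, the normal derivative of $f$ jumps by $\sqrt{2}\,M\bigl(g'(a)h(a/\epsilon)-g(a)h'(a/\epsilon)/\epsilon\bigr)$. This is negative (a concave crease) only for $a\lesssim\epsilon^2/L$.
\item Hence $\{D^2f\le0,\ f>0\}$ lies within $O(\epsilon^2/L)$ of the origin, where $\abs{Df}\le CM/L$.
\end{itemize}
So every slope you can count lies in a disc of area $\lesssim M^2/L^2$, while Step~2 needs $\gtrsim M^2/(L\epsilon)$. In particular, your heuristic that slopes of size $(M_+-t)/r(t)$ are good relative to $\{f>t\}$ fails for every $t\le M/2$, since here $r(t)\lesssim\sqrt{\epsilon L}$.

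\textbf{What is missing.} You need a way to charge $\abs{\det D^2 f}$ on the saddle set, which no touching-by-planes argument can see. This is why Viterbo's proof, and the paper's two-dimensional analogue, avoid contact sets entirely. The paper proceeds as follows:
\begin{itemize}
\item It writes $\int_K\abs{\det D^2f}=\int_{-\infty}^{\infty}\TV_z(f_{x_1})\d z$ by co-area over the level sets of $f_{x_2}$.
\item It bounds the vertical oscillation of $S_z=f-zp_2$ on each level curve by an integration by parts with no sign condition (Lemma~\ref{lem:ABP-1D-loop}).
\item It chooses a level $z_0$ with $\abs{z_0}$ and $\phi(z_0)$ both small, via Markov's inequality.
\item It integrates $f$ along the admissible path of Lemma~\ref{lem:path-existence}.
\end{itemize}
Even so, this yields only $C_K=16\diam(K)^2$ (Theorem~\ref{thm:ABP-n2-compact}). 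The statement with $\Leb{n}(K)$ is not proved classically in the paper. It is quoted from Viterbo, who derives it from his theorem on Lagrangians Hamiltonian isotopic to the zero section via the slicing induction \eqref{eq:induction-viterbo-step}. So even for $n=2$ your outline needs a genuinely new ingredient. Your remarks that the line-averaging ``seems tractable'' and that integrating in $t$ ``should'' produce $\Leb{n}(K)^{1/n}$ do not supply it.
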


\noindent Here, $\Osc{K} f \coloneqq \sup_K f - \inf_K f$. Theorem~\ref{thm:Viterbo-ABP} is proven as a corollary of a more general result about Lagrangian submanifolds of the cotangent bundle $T^*K$ which are Hamiltonian isotopic to the zero section (see \cite[Theorem 3.8]{viterbo2000metric}). Here, we show that Viterbo's proof idea can be reformulated as a classical analysis proof in the function case for $n = 2$, circumventing any direct reference to concepts from symplectic geometry. As a consequence of using Viterbo's proof as a guideline, our proof also avoids reference to convexity and contact sets. Further, we show how the proof may be modified to remove the compact support hypothesis by including a boundary term. (Viterbo believed such a modification to be possible, but he did not explore this question; see \cite{viterbo2000metric}.) Finally, we comment on the $n = 3$ case.

\section{Main Results}

\begin{theorem}[ABP for $n=2$, compactly supported case]\label{thm:ABP-n2-compact}
    Let $K$ be a compact subset of $\R^2$ with nonempty interior. There exists a constant $C_K$ depending on the square of the diameter of $K$ such that, for all $f \in C^2(\R^2)$ with $\supp f \subseteq K$,
    \[
        \bigl(\Osc{K} f\bigr)^2 \leq C_K \int_{K} \abs{\det D^2 f(x)} \d x.
    \]
\end{theorem}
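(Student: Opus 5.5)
We prove Theorem~\ref{thm:ABP-n2-compact} by a co-area argument in place of the concave envelope. For $f\in C^2(\R^2)$ the gradient map $Df\colon\R^2\to\R^2$ has Jacobian $\det D^2 f$. The area formula gives
\[
    \int_{K}\abs{\det D^2 f(x)}\d x = \int_{\R^2} \#\set{x\in K \given Df(x)=p}\d p \;\geq\; \Leb{2}\bigl(Df(K)\bigr),
\]
where we use that $Df=0$ outside $K$. The problem therefore reduces to a lower bound $\Leb{2}(Df(K))\geq c\,(\Osc{K} f)^2/\diam(K)^2$. This is where Viterbo's idea enters. Instead of contact sets, we count, for each covector $p$, the critical points of $f_p(x)\coloneqq f(x)-\langle p,x\rangle$, since these are exactly the preimages $Df(x)=p$. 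This is the function-level version of intersecting the graph of $Df$ with the graph of the constant section $p$.

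The key step is a lower bound on how many $p$ produce critical points. Normalize so that $\max f = M>0$ is attained at $x_0\in K$; the case of a negative minimum is symmetric, and $\Osc{K} f\le 2\max(M,-\min f)$. Since $\supp f\subseteq K$ and $f$ is $C^2$, $f$ vanishes on $\partial K$, and hence on $\R^2\setminus \operatorname{int}K$. Fix $p$ with $\abs{p}<M/\diam(K)$. Then
\[
    f_p(x_0) - f_p(y) = M - \langle p, x_0 - y\rangle \ge M - \abs{p}\diam(K) > 0 \quad\text{for } y\in\partial K,
\]
so $f_p$ restricted to the compact set $K$ attains its maximum in the interior of $K$. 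That point is a critical point of $f_p$, so $p\in Df(\operatorname{int} K)$. Hence $Df(K)$ contains the disc of radius $M/\diam(K)$, which gives
\[
    \pi M^2/\diam(K)^2 \leq \int_K \abs{\det D^2 f}\d x .
\]
This yields the theorem with $C_K = 4\diam(K)^2/\pi$, a constant depending on $\diam(K)^2$ as the statement requires.

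To stay closer to the spirit of Viterbo's co-area approach, I would recast the counting step using the co-area formula on the one-dimensional sets $\set{x \given \partial_1 f(x) = p_1}$. One then counts zeros of $\partial_2 f - p_2$ along these level curves and integrates over $p_1$. This lets one read off the measure bound as an iterated one-dimensional oscillation estimate: along each level curve, $\partial_2 f$ must sweep an interval of length comparable to $\Osc{K} f/\diam(K)$. This reformulation is optional, because the maximum-point argument above already suffices.

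The main obstacle is making the area formula rigorous for a merely $C^2$ map. Sard's theorem and the Lipschitz area formula handle this, since $Df$ is $C^1$. A second point needing care is attaching the interior maximum to $\operatorname{int}K$, including when $K$ has a rough boundary. I would handle this by working on the open set $\set{f_p > \max_{\partial K} f_p}$, which is nonempty and compactly contained in the interior.
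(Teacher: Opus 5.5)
Your proof is correct, but it takes a genuinely different route from the paper.

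\textbf{Your route.} You bound $\int_K\abs{\det D^2 f}$ from below by $\Leb{2}(Df(K))$, using the area formula for the $C^1$ map $Df$. You then show that $Df(\operatorname{int}K)$ contains the disc of radius $M/\diam(K)$ by maximizing $f-\langle p,x\rangle$ over $K$. Since $f=0$ on $\partial K\neq\varnothing$, this gives $(\Osc{K} f)^2\le(4/\pi)\diam(K)^2\int_K\abs{\det D^2 f}$. Neither of the two ``obstacles'' you flag is real. The inequality $\Leb{2}(g(K))\le\int_K\abs{\det Dg}$ is standard for $C^1$ maps and needs no Sard theorem. Any maximizer of $f-\langle p,x\rangle$ on $K$ exceeds every boundary value, so it is interior however rough $\partial K$ is.

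\textbf{The paper's route.} The paper instead slices by the level sets $\Sigma_z=f_{x_2}^{-1}(z)$. The co-area formula turns $\int_K\abs{\det D^2 f}$ into $\int\TV_z(f_{x_1})\d z$. A one-dimensional integration by parts bounds $\phi(z)$, the sum of the vertical oscillations of $S_z=f-zx_2$ over the closed level curves, by $\diam(K)\,\TV_z(f_{x_1})$. A Markov-inequality argument then selects a regular level $z_0$ with $\abs{z_0}$ and $\phi(z_0)$ both small. Finally, a planar topological lemma supplies a path from $\partial K$ to the maximum point, made of admissible vertical segments and loop arcs, along which the growth of $f$ is controlled.

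\textbf{Comparison.} Your route buys brevity and a better constant ($4/\pi$ versus $16$). It is also dimension independent: verbatim it gives $(\Osc{K} f)^n\le 2^n\omega_n^{-1}\diam(K)^n\int_K\abs{\det D^2 f}$, with $\omega_n$ the volume of the unit ball. That includes the $n=3$ case, where the paper's slicing approach stalls at Step~2.

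What it gives up is the paper's actual aim. Touching $f$ from above by affine functions and counting preimages of $Df$ is the supporting-hyperplane/contact-set mechanism in disguise; your maximizers are upper contact points. The paper's proof is built as a classical transcription of Viterbo's generating-function argument precisely to avoid that mechanism.

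\textbf{Your optional slicing paragraph.} This is not an independent alternative. The sweeping property of $\partial_2 f$ holds only on the whole level set $\{\partial_1 f=p_1\}$, not on each level curve. It also holds only for $\abs{p_1}<M/\diam(K)$, and it is deduced from the disc argument. Getting slice-wise control without affine touching is exactly what costs the paper its good-level selection and its path lemma.
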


\noindent Our constant $C_K$, which we shall show equals $16\, \diam(K)^2$, depends on the diameter of $K$, not the volume of $K$ as in Viterbo's proof. This implies that while our proof is qualitatively similar to Viterbo's, it is not a one-to-one translation of his proof into the language of classical analysis. At the same time, dependence on the diameter is not surprising, since the constant in the classical ABP estimate does depend on the diameter; see \eqref{eq:ABP-classical-analytic}.

The idea of the proof of Theorem~\ref{thm:ABP-n2-compact} is to use the co-area formula to express the integral of $\abs{\det D^2 f}$ over $K$ as an integral of the total variation of $f_{x_1}$ on the level sets of $f_{x_2}$. The problem is then reduced to many one-dimensional problems, where we need a one-dimensional version of ABP. That is the content of Lemma~\ref{lem:ABP-1D-loop}. We begin with a definition.

\begin{definition}[Vertical oscillation]\label{def:vosc}
    For $u \in C(\R^2)$ and a nonempty bounded subset $A \subseteq \R^2$, define the \emph{vertical oscillation} of $u$ on $A$ to be
    \[
        \Vosc{A}(u) \coloneqq \sup\set*{u(x) - u(y) \given x, y \in A,\ p_1(x) = p_1(y)},
    \]
    where for $i = 1, 2$, we let $p_i \colon \R^2 \to \R$ be the projection $(x_1, x_2) \mapsto x_i$.
\end{definition}

\begin{lemma}[Vertical ABP on a smooth curve]\label{lem:ABP-1D-loop}
    Suppose that $M$ is a smooth, compact, and connected one-dimensional submanifold of $\R^2$ (with or without boundary). Parametrize $M$ by a curve $\gamma(s)$ with $\abs{\gamma'(s)} = 1$ for all $s \in [0,L]$, where $L = \Haus{1}(M)$. If $u \in C^2(\R^2)$ and $M \subseteq u_{x_2}^{-1}(0)$, then
    \[
        \Vosc{M}(u) \leq \Haus{1}(p_1(M)) \int_{0}^{L} \abs{\dv{s} u_{x_1}(\gamma(s))} \d s.
    \]
\end{lemma}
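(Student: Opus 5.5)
Fix two points $x=\gamma(a)$ and $y=\gamma(b)$ of $M$ with $p_1(x)=p_1(y)$; say $a<b$ in the parametrization. The aim is to bound $u(x)-u(y)$ by the right-hand side of the lemma. Taking the supremum over all such pairs then gives the bound for $\Vosc{M}(u)$.

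The first step is to compute $u$ along $M$. By the chain rule,
\[
\dv{s}\, u(\gamma(s)) = u_{x_1}(\gamma(s))\,\gamma_1'(s) + u_{x_2}(\gamma(s))\,\gamma_2'(s) = u_{x_1}(\gamma(s))\,\gamma_1'(s),
\]
because $M \subseteq u_{x_2}^{-1}(0)$. Write $g(s) \coloneqq u_{x_1}(\gamma(s))$ and $h(s) \coloneqq \gamma_1(s) - c$, where $c = p_1(x) = p_1(y)$. Then $h(a)=h(b)=0$, and integrating by parts gives
\[
u(y)-u(x)=\int_a^b g(s)\,h'(s)\d s = \bigl[g h\bigr]_a^b - \int_a^b g'(s)\,h(s)\d s = -\int_a^b g'(s)\,h(s)\d s.
\]
The boundary terms vanish precisely because the two endpoints have the same first coordinate. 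This is where the vertical condition $p_1(x)=p_1(y)$ is used.

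The second step is the estimate. For $s\in[a,b]$ the point $\gamma(s)$ lies on $M$, so both $\gamma_1(s)$ and $c$ belong to $p_1(M)$. Since $M$ is connected, $p_1(M)$ is an interval, and therefore $\abs{h(s)} \le \Haus{1}(p_1(M))$. It follows that
\[
\abs{u(x)-u(y)} \le \Haus{1}(p_1(M)) \int_a^b \abs{g'(s)}\d s \le \Haus{1}(p_1(M))\int_0^L \abs{\dv{s}\, u_{x_1}(\gamma(s))}\d s.
\]
The integration by parts is legitimate because $u\in C^2$ and $\gamma$ is smooth, so both $g$ and $h$ are $C^1$.

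The only delicate point is the closed-loop case, where $\gamma$ is periodic rather than an embedding of an interval. Here I cut the loop at one point and use $\gamma$ on $[0,L]$. The two given points still occur at parameters $a,b\in[0,L]$, and the argument above only uses the arc between them, which lies in $M$. So no modification is needed. The whole argument reduces to the integration by parts together with the connectedness of $M$, which is what controls $\abs{h}$.
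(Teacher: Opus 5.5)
Your proof is correct and follows the paper's argument. The paper also integrates $u_{x_1}(\gamma(s))\,(p_1\gamma)'(s)$ by parts between the two parameters and bounds $\abs{p_1(x-\gamma(s))}$ by $\Haus{1}(p_1(M))$; subtracting $c$ from $\gamma_1$ first, as you do, simply makes the boundary term vanish at once instead of rewriting it via the fundamental theorem of calculus, and your remark that $p_1(M)$ is an interval by connectedness justifies that last bound, which the paper leaves implicit.
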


\noindent Notably, the constant depends only on the projected length of $M$.

\begin{proof}[Proof of Lemma~\ref{lem:ABP-1D-loop}]
    Take any two points $x, y \in M$ with $p_1(x) = p_1(y)$. Choose $s_1, s_2 \in [0,L]$ such that $\gamma(s_1) = y$ and $\gamma(s_2) = x$. Note that $p_1 \gamma(s_1) = p_1 \gamma(s_2) = p_1(x)$. Using the fact that $u_{x_2}(\gamma(s)) = 0$ for all $s$ and integrating by parts, we find
    \begin{align*}
        u(x) - u(y)
        &= \int_{s_1}^{s_2} u_{x_1}(\gamma(s))\,(p_1 \gamma)'(s) \d s \\
        &= \bigl[u_{x_1}(\gamma(s_2)) - u_{x_1}(\gamma(s_1))\bigr]\, p_1(x)
            - \int_{s_1}^{s_2} \dv{s} u_{x_1}(\gamma(s))\, p_1 \gamma(s) \d s \\
        &= \int_{s_1}^{s_2} \dv{s} u_{x_1}(\gamma(s))\, p_1(x) \d s
            - \int_{s_1}^{s_2} \dv{s} u_{x_1}(\gamma(s))\, p_1 \gamma(s) \d s \\
        &= \int_{s_1}^{s_2} \dv{s} u_{x_1}(\gamma(s))\, p_1(x - \gamma(s)) \d s \\
        &\leq \Haus{1}(p_1(M)) \int_{0}^{L} \abs{\dv{s} u_{x_1}(\gamma(s))} \d s.
    \end{align*}
    We conclude by taking the supremum over $x$ and $y$.
\end{proof}

\subsection{Definitions, notation, and a topological lemma}\label{sec:defn-not-top}

The next lemma is a topological existence result which will be used in the final step of the proof of Theorem~\ref{thm:ABP-n2-compact}. In order to state the lemma, we need to make several definitions and define some notation.

Fix a compact subset $K$ of $\R^2$ with nonempty interior, and suppose that $\Sigma = \bigcup_{i=1}^{m} \Omega_i \subseteq K$ is a smooth, compact (embedded) $1$-dimensional submanifold of $\R^2$ which is a disjoint union of simple closed curves $\Omega_i$. Define the \emph{crossing number} $X_\Sigma \colon (K \setminus \Sigma) \times (K \setminus \Sigma) \to \Z_{\geq 0}$ by
\[
    X_\Sigma(x, y) \coloneqq \inf\set*{\#\bigl(\gamma([0,1]) \cap \Sigma\bigr) \given \gamma \in C^\infty([0,1]; K),\ \gamma(0) = x,\ \gamma(1) = y}.
\]
Next, for a fixed $x_0 \in K \setminus \Sigma$, define a \emph{binary coloring} of the relatively open set $K \setminus \Sigma$ with respect to $x_0$ to be one of the two functions
\[
    c_{0, x_0}(y) \coloneqq (-1)^{X_\Sigma(x_0, y)}, \qquad c_{1, x_0} \coloneqq -c_{0, x_0},
\]
for all $y \in K \setminus \Sigma$. Hence points $y \in K \setminus \Sigma$ with even crossing number with respect to $x_0$ satisfy $c_{0, x_0}(y) = +1$ and $c_{1, x_0}(y) = -1$, while points with odd crossing number satisfy $c_{0, x_0}(y) = -1$ and $c_{1, x_0}(y) = +1$.

Fix a point $x_0 \in K \setminus \Sigma$ and a coloring $c \in \set{c_{0, x_0}, c_{1, x_0}}$. Each $\Omega_i$ has a natural orientation (choice of outer unit normal vector) defined by the normal vector $N$ which points into the unbounded component of $\R^2 \setminus \Omega_i$. We call $\Omega_i$ \emph{$c$-positive} if for all $\delta$ sufficiently close to $0$,
\[
    c\left(\set*{y + \delta N(y) \given y \in \Omega_i} \right)\subseteq \set{\sgn(\delta)}.
\]
Similarly, we call $\Omega_i$ \emph{$c$-negative} if for all $\delta$ sufficiently close to $0$,
\[
    c\left(\set*{y + \delta N(y) \given y \in \Omega_i}\right) \subseteq \set{-\sgn(\delta)}.
\]

A bit more notation shall prove helpful. For $x, y \in K$, we write $x \prec y$ (respectively, $x \succ y$) if $p_2(x) \leq p_2(y)$ (respectively, $p_2(x) \geq p_2(y)$) \emph{and} $p_1(x) = p_1(y)$. Also, if $x \in \Sigma$, let $\Omega_{i(x)}$ denote the connected component of $\Sigma$ containing $x$. We may now define $G \colon \Sigma \to \Sigma$ by
\[
    G(x) \coloneqq
    \begin{cases}
        \bigl(p_1(x),\ \sup\set{p_2(y) \given x \prec y \in \Omega_{i(x)}}\bigr) & \text{if } \Omega_{i(x)} \text{ is $c$-positive,} \\[4pt]
        \bigl(p_1(x),\ \inf\set{p_2(y) \given x \succ y \in \Omega_{i(x)}}\bigr) & \text{if } \Omega_{i(x)} \text{ is $c$-negative.}
    \end{cases}
\]
Intuitively, $G(x)$ is the highest point above $x$ on a $c$-positive loop, and the lowest point below $x$ on a $c$-negative loop. Hence $G$ depends on the choice of $c$.

Further, for $x, y \in K$, let $\ell(x, y)(t) \coloneqq (1-t)x + t y$ for $t \in [0,1]$, denote its image by $\ell(x, y) \coloneqq \ell(x, y)([0,1])$, and call the interior of the image $\ell(x, y)^\circ \coloneqq \ell(x, y)((0,1))$. Moreover, for $x, y \in \Omega_i$, denote by $\gamma(x, y)$ one of the two unit-speed parametrizations of the path inside $\Omega_i$ with initial point $x$ and final point $y$. (It does not matter which of the two paths we choose.) A path $\alpha$ in $K$ shall be called \emph{$(\Sigma, c)$-admissible} if any one of the following statements is true:
\begin{enumerate}[label=(\roman*)]
    \item The path $\alpha$ equals $\ell(x, y)$ for some $y \prec x$ such that $\ell(x, y)^\circ \subseteq c^{-1}(+1)$.
    \item The path $\alpha$ equals $\ell(x, y)$ for some $y \succ x$ such that $\ell(x, y)^\circ \subseteq c^{-1}(-1)$.
    \item The path $\alpha$ equals $\gamma(G(x), x)$ for some $i \in \set{1, \ldots, m}$ and $x \in \Omega_i$.
\end{enumerate}
In other words, admissible paths are either vertical segments that move down in regions with color $+1$, vertical segments that move up in regions with color $-1$, or sub-arcs of a loop $\Omega_i$ maximal with respect to endpoints that have the same $p_1$-projection as $x$. Finally, recall that the concatenation of two paths $\alpha$ and $\beta$ is denoted by $\alpha * \beta$, where $\alpha$ is traversed before $\beta$.

\begin{lemma}\label{lem:path-existence}
    Suppose that $\Sigma = \bigcup_{i=1}^{m} \Omega_i \subseteq K$ is a disjoint union of smooth, simple closed curves $\Omega_i$ in the interior of some compact set $K \subset \R^2$. For all $x^* \in K \setminus \Sigma$ and for each binary coloring $c \in \set{c_{0, x^*}, c_{1, x^*}}$ of $K \setminus \Sigma$, there exists a piecewise-smooth path $\Gamma = \alpha_1 * \alpha_2 * \cdots * \alpha_b$ for some finite integer $b$ and $(\Sigma, c)$-admissible paths $\alpha_i$, with the following properties:
    \begin{enumerate}[label=(\arabic*)]
        \item The initial point of $\Gamma$ is in $\partial K$ and the final point of $\Gamma$ is $x^*$.
        \item If $\Gamma$ exits a given connected component of $\Sigma$, then $\Gamma$ never returns to that same component. (Formally, $\Gamma(s_1) \in \Omega_i$ and $\Gamma(s_1 + \epsilon) \notin \Omega_i$ for small $\epsilon > 0$ implies that $\Gamma(s) \notin \Omega_i$ for all $s > s_1$.)
    \end{enumerate}
\end{lemma}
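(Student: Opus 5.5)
The plan is to build $\Gamma$ \emph{backwards}, starting from $x^*$, and to stay on the single vertical line $\{p_1 = p_1(x^*)\}$ throughout. Every admissible path of types (i)--(iii) preserves $p_1$ at its endpoints, and types (i), (ii) lie entirely on a vertical line. So we can look for $\Gamma$ as an alternation of vertical segments and loop arcs $\gamma(G(x),x)$ whose endpoints all lie on that line.

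\emph{Preliminary step: the coloring is locally constant and flips across each curve.} We first record two facts about the coloring.
\begin{enumerate}[label=(\alph*)]
    \item The parity of $X_\Sigma(x^*,\cdot)$ is locally constant on $K\setminus\Sigma$.
    \item The parity changes when one crosses a single $\Omega_i$ transversally.
\end{enumerate}
We obtain these from the Jordan curve theorem for each smooth $\Omega_i$. The underlying fact is that the parity of the number of transversal intersections of a path with $\Omega_i$ depends only on the endpoints of the path, namely on whether they lie inside or outside $\Omega_i$. Consequently each $\Omega_i$ is either $c$-positive or $c$-negative, so $G$ is well defined on all of $\Sigma$. I expect this to be the main obstacle. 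The definition of $X_\Sigma$ uses an infimum over smooth paths counting intersection \emph{points}, so it must be reconciled with a transversal parity count. The approach is to perturb a minimizing path to be transversal (which does not increase the count) and to argue that a tangential touching can be removed without changing parity. Some care is also needed because paths are required to lie in $K$. This is harmless since $\Sigma$ lies in the interior of $K$, so small perturbations near $\Sigma$ stay in $K$.

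\emph{Backward construction.} Let $z_0=x^*$. Given a current point $z\in K\setminus\Sigma$, or a point just off $\Sigma$ as arranged below, we move vertically: \emph{up} if $c(z)=+1$, \emph{down} if $c(z)=-1$. We stop at the first point where the segment meets $\Sigma\cup\partial K$. The reversed segment is then admissible of type (i) or (ii), because its interior avoids $\Sigma$ and hence has constant color $c(z)$.
\begin{itemize}
    \item If the first point hit lies on $\partial K$, we stop.
    \item Otherwise we hit some $x\in\Omega_i$ and prepend the arc $\gamma(G(x),x)$, which is admissible of type (iii).
\end{itemize}
We then check that the orientation conventions make the next step consistent. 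Suppose we arrived at $x$ moving up in color $+1$.
\begin{itemize}
    \item If $\Omega_i$ is $c$-positive, then color $+1$ is on the outer side of $\Omega_i$ at $x$, so $x$ is a lowest point of $\Omega_i$ on this line. The point $G(x)$ is the highest point, and just above $G(x)$ we are outside $\Omega_i$ in color $+1$. The next backward step is therefore again upward, away from $\Omega_i$.
    \item If $\Omega_i$ is $c$-negative, then $x$ is a highest point of $\Omega_i$ on this line and $G(x)$ is the lowest. Just below $G(x)$ the color is $-1$, so the next backward step is downward, again away from $\Omega_i$.
\end{itemize}
The case where we arrive moving down in color $-1$ is symmetric. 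Tangential contacts are covered by the same rule; there the arc may degenerate to the point $G(x)=x$.

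\emph{Termination and property (2).} After leaving $\Omega_i$ at $G(x)$, we move in the direction pointing away from all of $\Omega_i\cap\{p_1=p_1(x^*)\}$. This holds because $G$ is defined as a sup or inf over that set. Hence we never meet $\Omega_i$ again, and by induction this also holds on all later backward steps, since each step moves away from every loop already visited in that direction or lies beyond them. Reversing the whole backward path gives a forward path that never re-enters a component it has exited, which is property (2). Since there are finitely many components, the process takes at most $m$ loop arcs. The last vertical segment must then reach $\partial K$, because the vertical line exits the compact set $K$. This gives the initial point of $\Gamma$ on $\partial K$ and the final point $x^*$, which is property (1), with $b\le 2m+1$.
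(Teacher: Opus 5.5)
Your proposal is correct and is essentially the paper's construction: the paper's recursion through $\Xi$ and $G$ also never leaves the vertical line through $x^*$, moves vertically in the direction dictated by the color, and jumps along a loop to $G(x)$ whenever it hits $\Sigma$. The differences are bookkeeping: the paper cuts the path at the outer boundaries of the successive nested components $D_0, D_1, \ldots$ of $K\setminus\Sigma$ where you run one recursion, and you additionally supply the parity facts the paper takes for granted.

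Two points need tightening.

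\emph{A false but unused claim.} Your assertions that $x$ is a lowest (resp.\ highest) point of $\Omega_i$ on the line are false in general; take a C-shaped loop both of whose arms cross the line. However, only the facts about $G(x)$ and the color just beyond it are actually used, so nothing breaks.

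\emph{The justification of property~(2).} The clause ``or lies beyond them'' needs the invariant that the backward path never passes from the exterior of a loop into its interior. This holds because vertical segments avoid $\Sigma$ and every loop is left on its exterior side. Consequently a reversal of direction happens only when a loop is hit from inside. That loop then encloses the entire path so far, and the path remains outside it afterwards. This is precisely what the paper's nested decomposition $D_0, D_1, \ldots$ encodes.
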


For a proof of Lemma~\ref{lem:path-existence}, as well as pictures of these paths for the different colorings, see the Appendix. (While Lemma~\ref{lem:path-existence} does not seem to be a deep result of plane topology or geometry, to the best of our knowledge, the result is absent from the literature.)

\subsection{Proof of ABP for \texorpdfstring{$n=2$}{n=2}, compactly supported case}

\begin{proof}[Proof of Theorem~\ref{thm:ABP-n2-compact}]

\textbf{Step 0.}\quad
By a standard approximation argument, we may assume that $f \in C^\infty(\R^2)$.

\medskip\noindent\textbf{Step 1.}\quad
We prove that
\begin{equation}\label{ineq:Step1}
    \boxed{\int_{K} \abs{\det D^2 f(x)} \d x = \int_{-\infty}^{\infty} \TV_z(f_{x_1}) \d z,}
\end{equation}
where $\Sigma_z \coloneqq f_{x_2}^{-1}(z)$ and $\TV_z(f_{x_1})$ is the total variation of $f_{x_1}$ on $\Sigma_z$. Since $n = 2$, we have $\abs{\det D^2 f} = \abs{\nabla f_{x_1} \cdot X_{f_{x_2}}}$, where $X_{f_{x_2}} \coloneqq (-f_{x_2 x_2},\, f_{x_1 x_2})$. Note that $\abs{X_{f_{x_2}}} = \abs{\nabla f_{x_2}}$, so
\[
    \frac{\abs{\det D^2 f}}{\abs{\nabla f_{x_2}}}
    = \abs{\nabla f_{x_1} \cdot X_{f_{x_2}}/\abs*{X_{f_{x_2}}}}.
\]
Therefore, by the co-area formula \cite[Theorem 3.2.22]{federer1969},
\begin{equation}\label{eq:coarea}
    \int_{K} \abs{\det D^2 f(x)} \d x
    = \int_{-\infty}^{\infty} \int_{\Sigma_z} \abs{\nabla f_{x_1} \cdot X_{f_{x_2}}/\abs*{X_{f_{x_2}}}} \d \Haus{1} \d z.
\end{equation}
By Step 0 we may assume that $f_{x_2}$ is smooth, so Sard's theorem \cite[Theorem 6.10]{lee2003smooth} implies that $f_{x_2}^{-1}(z)$ is a regular level set for a.e.\ $z \in \R$. Denote by $\Zreg$ the set of regular values of $f_{x_2}$. By the regular value theorem \cite[Theorem 1.38]{warner1983}, $f_{x_2}^{-1}(z)$ is a smooth (embedded) codimension-$1$ submanifold of $\R^2$ for all $z \in \Zreg$. Since $f_{x_1}$ is smooth,
\[
    \TV_z(f_{x_1}) = \int_{\Sigma_z} \abs{\nabla_z f_{x_1}} \d \Haus{1},
\]
where $\nabla_z$ is the tangential derivative on $\Sigma_z$. We claim that
\begin{equation}\label{eq:TV}
    \TV_z(f_{x_1}) = \int_{\Sigma_z} \abs{\nabla f_{x_1} \cdot X_{f_{x_2}}/\abs*{X_{f_{x_2}}}} \d \Haus{1}.
\end{equation}
Since $\nabla f_{x_2}$ is normal to $\Sigma_z$ for regular $z$ and $\nabla f_{x_2} \cdot X_{f_{x_2}} = 0$, the unit vector field $X_{f_{x_2}} / \abs*{X_{f_{x_2}}}$ is tangent to $\Sigma_z$, so
\[
    \nabla_z f_{x_1} = \nabla f_{x_1} \cdot \frac{X_{f_{x_2}}}{\abs*{X_{f_{x_2}}}}.
\]
This establishes \eqref{eq:TV}. Inserting \eqref{eq:TV} into \eqref{eq:coarea} yields \eqref{ineq:Step1}.

\medskip\noindent\textbf{Step 2.}\quad
Let $z^* \coloneqq \sup_K f_{x_2}$, $z_* \coloneqq \inf_K f_{x_2}$, and $\Zhat \coloneqq \Zreg \cap [z_*, z^*] \setminus \set{0}$. For any $z \in \Zhat$, the compact support hypothesis implies that
\[
    \Sigma_z = \bigsqcup_{i=1}^{m_z} \Omega_{z, i},
\]
where $m_z$ is a positive integer and each $\Omega_{z, i} \subset K$ is a smooth, simple closed curve. For each $z \in \Zhat$, define\footnote{The function $S_z$ is known as a generating function for the Lagrangian submanifold $\set{(p_1(x),\, f_{x_1}(x)) \given x \in \Sigma_z} \subseteq T^*K$; see \cite{viterbo1992symplectic, viterbo2000metric} for definitions. For the purposes of our analysis, we only need to view $S_z$ as an appropriate modification of $f$ which satisfies the hypotheses of Lemma~\ref{lem:ABP-1D-loop}.}
\[
    S_z(x) \coloneqq f(x) - z\, p_2(x), \qquad x \in K.
\]
Note that $\partial S_z / \partial x_2 = 0$ on $\Sigma_z$ and $\partial S_z / \partial x_1 = f_{x_1}$ on $K$. Define $\phi \colon \R \to [0, \infty)$ by
\[
    \phi(z) \coloneqq
    \begin{cases}
        \displaystyle \sum_{i=1}^{m_z} \Vosc{\Omega_{z, i}} S_z & \text{if } z \in \Zhat, \\[4pt]
        0 & \text{otherwise.}
    \end{cases}
\]
We claim that
\begin{equation}\label{ineq:Step2}
    \boxed{\phi(z) \leq \diam(K)\, \TV_z(f_{x_1}) \quad \text{for all } z \in \R.}
\end{equation}
The claim is trivial for $z \notin \Zhat$, so suppose $z \in \Zhat$. The function $S_z$ and the loops $\Omega_{z, i}$ satisfy the hypotheses of Lemma~\ref{lem:ABP-1D-loop}. Hence for $1 \leq i \leq m_z$,
\[
    \Vosc{\Omega_{z, i}} S_z
    \leq \Haus{1}(p_1(\Omega_{z, i})) \int_{\Omega_{z, i}} \abs*{\dv{s} f_{x_1}} \d \Haus{1}(s).
\]
Noting that $\Haus{1}(p_1(\Omega_{z, i})) \leq \diam(K)$, summing in $i$ and using the disjointness of the $\Omega_{z, i}$, the claim follows.

\medskip\noindent\textbf{Step 3.}\quad
We claim that
\begin{equation}\label{ineq:osc-phi}
    \boxed{\bigl(\Osc{K} f\bigr)^2 \leq 16\, \diam(K) \int_{-\infty}^{\infty} \phi(z) \d z.}
\end{equation}
The proof is adapted from \cite[Proposition 2.2]{viterbo2000metric}. Temporarily fix a real number $a > \int_{-\infty}^{\infty} \phi(z) \d z$, and fix $\lambda > 0$ (to be determined). Define
\[
    E_\lambda \coloneqq \set{z \in \Zhat \given \phi(z) \geq \lambda}.
\]
Choose $x^*, x_* \in K$ such that $f(x^*) = \sup_K f$ and $f(x_*) = \inf_K f$. Without loss of generality, at least one of $x^*$ and $x_*$ is an interior critical point of $f$ (otherwise, since $f$ vanishes on $\partial K$, there is nothing to prove). We may assume $x^*$ is interior to $K$ for concreteness; note that $x^* \in \Sigma_0$. By Markov's inequality,
\[
    \abs{E_\lambda}
    \leq \frac{1}{\lambda} \int_{\set{z \in \Zhat \given \phi(z) \geq \lambda}} \phi(z) \d z
    < \frac{a}{\lambda}.
\]

We first analyze the case $\Zhat \setminus E_\lambda = \varnothing$. Here $E_\lambda = \Zhat$, so $z^* - z_* = \abs{E_\lambda} < a/\lambda$. Since $f_{x_2}(x^*) = 0$, we have $0 \in [z_*, z^*]$, hence $z^* < a/\lambda$. Setting $q \coloneqq (p_1(x^*),\, \sup\set{p_2(y) \given y \in \partial K \text{ and } y \prec x^*}) \in \partial K$, we have $\ell(q, x^*) \subseteq K$. (Note carefully that convexity of $K$ was not assumed and is not needed here, since $x^*$ is an interior point of $K$.) Using the fact that $f$ vanishes on $\partial K$,
\begin{multline*}
    \sup_K f = \sup_K f - f(q) = \int_{p_2(q)}^{p_2(x^*)} f_{x_2}(p_1(x^*), s) \d s \\
    \leq z^*\, p_2(x^* - q) \leq \frac{a}{\lambda}\, \diam(K).
\end{multline*}
By the same reasoning, we get $-\inf_K f \leq (a/\lambda)\, \diam(K)$. Hence we obtain $\Osc{K} f \leq 2(a/\lambda)\, \diam(K)$. As we shall see, this estimate is sharper than what we get in the case $\Zhat \setminus E_\lambda \neq \varnothing$.

Therefore, without loss of generality, assume $\Zhat \setminus E_\lambda \neq \varnothing$. Since $\abs{E_\lambda} < a/\lambda$, there exists $z_0 \in \Zhat \setminus E_\lambda$ such that $\abs{z_0} \leq a/\lambda$. (Otherwise, there would exist an open interval $J$ containing $0$ such that $\abs{J \cap E_\lambda} \geq a/\lambda$, which is a contradiction.) We estimate $\Osc{K} f$ by estimating $\sup_K f$ and $\inf_K f$ separately, but we provide details for the supremum only, since the argument for the infimum is identical. We prove that $\sup_K f \leq (a/\lambda)\, \diam(K) + \lambda$.

Choose a binary coloring $c \in \set{c_{0, x^*}, c_{1, x^*}}$ (see Section~\ref{sec:defn-not-top}) for the relatively open set $K \setminus \Sigma_{z_0}$ according to the sign of $z_0$:
\[
    c \coloneqq
    \begin{cases}
        c_{0, x^*} & \text{if } z_0 < 0, \\
        c_{1, x^*} & \text{if } z_0 > 0.
    \end{cases}
\]
With this choice, we can control the change in $f$ over $(\Sigma_{z_0}, c)$-admissible vertical segments by vertical displacements; this is made precise in the following lemma.

\begin{lemma}\label{lem:change-over-vertical-admissible-segments}
    For any $(\Sigma_{z_0}, c)$-admissible line segment $\ell(x, y)$,
    \[
        f(y) - f(x) \leq z_0\, p_2(y - x).
    \]
\end{lemma}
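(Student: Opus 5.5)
The plan is to reduce the inequality to a sign statement about $g \coloneqq f_{x_2} - z_0$, namely that $\sgn g = c$ on all of $K \setminus \Sigma_{z_0}$, and then integrate along the vertical segment. Let $\ell(x,y)$ be admissible with $p_1(x) = p_1(y)$. By the fundamental theorem of calculus along the segment,
\[
    f(y) - f(x) - z_0\, p_2(y-x) = \int_{p_2(x)}^{p_2(y)} g(p_1(x), s) \d s .
\]
The open segment $\ell(x,y)^\circ$ avoids $\Sigma_{z_0} = g^{-1}(0)$, so $g$ has constant sign there, and that sign should be $c$.

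\begin{itemize}
    \item In case (i), $y \prec x$, so the integral runs downward, and the claimed sign gives $g > 0$. Hence the integral is $\leq 0$.
    \item In case (ii), $y \succ x$, so the integral runs upward, and the claimed sign gives $g < 0$. Hence the integral is again $\leq 0$.
\end{itemize}

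It remains to prove the sign claim $\sgn g = c$ on $K \setminus \Sigma_{z_0}$. I would compare every point to $x^*$.

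\begin{itemize}
    \item \emph{Sign at $x^*$.} We have $f_{x_2}(x^*) = 0$ and $z_0 \neq 0$ because $z_0 \in \Zhat$. So $x^* \notin \Sigma_{z_0}$ and $\sgn g(x^*) = -\sgn z_0$.
    \item \emph{Parity lemma.} For $y \in K \setminus \Sigma_{z_0}$, I will show $\sgn g(y) = \sgn g(x^*)\,(-1)^{X_{\Sigma_{z_0}}(x^*,y)}$.
    \item \emph{Matching the coloring.} If $z_0 < 0$, then $\sgn g(x^*) = +1$, so $\sgn g(y) = (-1)^{X} = c_{0,x^*}(y)$. If $z_0 > 0$, then $\sgn g(y) = -(-1)^{X} = c_{1,x^*}(y)$. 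In both cases this is exactly the $c$ chosen above.
\end{itemize}

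The parity lemma is the step I expect to be the main obstacle, because $X_\Sigma$ is defined as an infimum over all smooth paths in $K$, and such paths may touch $\Sigma_{z_0}$ tangentially without changing the sign of $g$. Take a smooth path $\gamma$ from $x^*$ to $y$ realizing the minimum $X$. Since $z_0$ is a regular value, $\Sigma_{z_0}$ is a compact embedded curve along which $\nabla g \neq 0$, so near each point of $\Sigma_{z_0}$ the function $g$ changes sign across the curve. By a small perturbation (transversality), I would replace $\gamma$ by a path transverse to $\Sigma_{z_0}$ that has no more intersection points. A transverse intersection can be removed only by cancelling in pairs, never singly, so the count does not drop. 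The perturbed path therefore still has exactly $X$ intersections and remains admissible in the infimum.

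Along a transverse path, $g \circ \gamma$ vanishes only at the $X$ crossing points, and it changes sign at each of them because $(g\circ\gamma)' = \nabla g \cdot \gamma' \neq 0$ there. Hence $\sgn g(y) = (-1)^X \sgn g(x^*)$.

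I also need the perturbation to stay inside $K$. This holds near any crossing, because $\Sigma_{z_0} \subset \operatorname{int} K$: $g = -z_0 \neq 0$ off $\supp f$.
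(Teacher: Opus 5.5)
Your argument is correct and is essentially the paper's. You identify $c^{-1}(-1)$ and $c^{-1}(+1)$ with $\{f_{x_2}<z_0\}$ and $\{f_{x_2}>z_0\}$ by checking the sign at $x^*$ and using that both the coloring and the sign of $f_{x_2}-z_0$ flip across the regular level set $\Sigma_{z_0}$, then integrate $f_{x_2}-z_0$ along the vertical segment; your path-parity argument just makes explicit the paper's one-line claim that the crossing number jumps by $\pm1$ across $\Sigma_{z_0}$.

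One step needs tightening. $X_\Sigma$ counts image points, not crossing times, and a minimizer can pass through the same point of $\Sigma_{z_0}$ several times, so a generic transverse perturbation does not obviously keep the count at $X$. Instead, first shortcut $\gamma$ between its first and last visit to each such point, which does not increase the count; minimality then forces exactly one sign-changing crossing there.
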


\begin{proof}[Proof of Lemma~\ref{lem:change-over-vertical-admissible-segments}]
    First suppose that $z_0 > 0$, so $c = c_{1, x^*}$. Since $x^*$ is a critical point of $f$, $f_{x_2}(x^*) = 0 < z_0$. Therefore, by continuity, $f_{x_2} < z_0$ inside the connected component of $K \setminus \Sigma_{z_0}$ containing $x^*$. The coloring $c$ is constant on that component, so by definition of $c_{1, x^*}$ that component has color $c_{1, x^*}(x^*) = -1$. Moreover, the outer unit normal vector of $\Sigma_{z_0}$ is a scalar multiple of $\nabla f_{x_2}$, which does not vanish since $z_0 \in \Zreg$. Hence the crossing number always jumps by $\pm 1$ across $\Sigma_{z_0}$, and so
    \[
        \begin{aligned}
            c^{-1}(-1) &= \set{x \in K \setminus \Sigma_{z_0} \given f_{x_2}(x) < z_0}, \\
            c^{-1}(+1) &= \set{x \in K \setminus \Sigma_{z_0} \given f_{x_2}(x) > z_0}.
        \end{aligned}
    \]
    Similarly, the reader can verify that if $z_0 < 0$ (so $c = c_{0, x^*}$), the same characterizations hold.

    Since $\ell(x, y)$ is $(\Sigma_{z_0}, c)$-admissible, either $y \prec x$ or $y \succ x$. Suppose $y \succ x$. Then $p_2(y - x) \geq 0$ and $\ell(x, y)^\circ \subseteq c^{-1}(-1) = \set{f_{x_2} < z_0}$. Hence
    \begin{align*}
        f(y) - f(x)
        &= \int_{0}^{1} \dv{s} f((1-s)x + sy) \d s \\
        &= \abs{p_2(y - x)} \int_{0}^{1} f_{x_2}((1-s)x + sy) \d s \\
        &\leq \abs{p_2(y - x)} \int_{0}^{1} z_0 \d s \\
        &= z_0\, p_2(y - x).
    \end{align*}
    Similarly, $f(y) - f(x) \leq z_0\, p_2(y - x)$ holds whenever $y \prec x$.
\end{proof}

Now we can finish Step 3. Let $\Gamma$ be the path from some point $q \in \partial K$ to $x^*$ provided by Lemma~\ref{lem:path-existence} corresponding to the coloring $c$. Let $x_0 = q,\, x_1, \ldots, x_{k-1},\, x_k = x^*$ be the endpoints of each admissible segment of $\Gamma$. Let us write
\[
    f(x^*) - f(q) = \sum_{j=1}^{k} \bigl[f(x_j) - f(x_{j-1})\bigr].
\]
By Lemma~\ref{lem:change-over-vertical-admissible-segments}, if $x_j$ and $x_{j-1}$ are endpoints of some admissible $\ell(x, y)$, then $f(x_j) - f(x_{j-1}) \leq z_0\, p_2(x_j - x_{j-1})$. The only other possibility is that $x_{j-1} = G(x_j)$ for some $x_j \in \Omega_i$ and some $i$, in which case
\begin{align*}
    f(x_j) - f(x_{j-1})
    &= S_{z_0}(x_j) - S_{z_0}(G(x_j)) + z_0\, p_2(x_j - x_{j-1}) \\
    &\leq \Vosc{\Omega_{z_0, i}} S_{z_0} + z_0\, p_2(x_j - x_{j-1}).
\end{align*}
By Lemma~\ref{lem:path-existence}, Property~(2), the path $\Gamma$ visits any given connected component at most once; hence the sum of the $\Vosc{\Omega_{z_0, i}} S_{z_0}$ is at most $\phi(z_0)$. It follows that
\[
    f(x^*) - f(q) \leq \phi(z_0) + z_0 \sum_{j=1}^{k} p_2(x_j - x_{j-1})
    = \phi(z_0) + z_0\, p_2(x^* - q).
\]
Hence the total change in $f$ over $\Gamma$ is at most $\phi(z_0) + \abs{z_0}\, \diam(K)$. Since $z_0 \notin E_\lambda$, $\phi(z_0) \leq \lambda$, and since $\abs{z_0} \leq a/\lambda$, we obtain
\begin{equation*}
    \sup_K f = f(x^*) - f(q) = \int_{0}^{\Haus{1}(\Gamma)} \dv{s} f(\Gamma(s)) \d \Haus{1} \leq \frac{a}{\lambda}\, \diam(K) + \lambda.
\end{equation*}
The same argument shows $-\inf_K f \leq (a/\lambda)\, \diam(K) + \lambda$, so
\[
    \Osc{K} f \leq 2(a/\lambda)\, \diam(K) + 2\lambda.
\]
The minimum of the function $\lambda \mapsto (a/\lambda)\, \diam(K) + \lambda$ occurs at $\lambda = \sqrt{a\, \diam(K)}$, so choosing this $\lambda$ we find
\[
    \Osc{K} f \leq 4\sqrt{a\, \diam(K)}.
\]
Squaring both sides and letting $a$ tend to $\int_{-\infty}^{\infty} \phi(z) \d z$, we obtain \eqref{ineq:osc-phi}. Inserting \eqref{ineq:Step2} into \eqref{ineq:osc-phi}, and finally using \eqref{ineq:Step1}, we conclude that
\begin{align*}
    \bigl(\Osc{K} f\bigr)^2
    &\leq 16\, \diam(K) \int_{-\infty}^{\infty} \diam(K)\, \TV_z(f_{x_1}) \d z.
\end{align*}

\noindent Theorem \ref{thm:ABP-n2-compact} now follows from Steps 0 through 3 with $C_K = 16\, \diam(K)^2$.  \end{proof}

\subsection{ABP for \texorpdfstring{$n=2$}{n=2} in the general case}

Without the compact support assumption, we need to incorporate a boundary term into the ABP estimate. As Viterbo suspected, the same proof strategy works, leading to:

\begin{theorem}[ABP for $n=2$, general case]\label{thm:ABP-n2-boundary}
    Let $K$ be a compact subset of $\R^2$ with nonempty interior. There exists a constant $C_K$ depending on the square of the diameter of $K$ such that, for all $f \in C^2(\R^2)$,
    \[
        \bigl(\Osc{K} f - \Osc{\partial K} f\bigr)^2 \leq C_K \int_{K} \abs{\det D^2 f(x)} \d x.
    \]
\end{theorem}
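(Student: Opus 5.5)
The plan is to run Steps 0--3 of the proof of Theorem~\ref{thm:ABP-n2-compact} essentially unchanged, and alter only the two places where $f|_{\partial K}=0$ was used. After subtracting a constant we may normalize $\inf_{\partial K} f = 0$, so $\sup_{\partial K} f = \Osc{\partial K} f$. If $\sup_K f > \sup_{\partial K} f$, then the maximum point $x^*$ is an interior critical point. Likewise, if $\inf_K f < \inf_{\partial K} f$, the minimum point $x_*$ is an interior critical point. In either case we get $0\in[z_*,z^*]$ exactly as before. Write $A \coloneqq \sup_K f - \sup_{\partial K} f \ge 0$ and $B \coloneqq \inf_{\partial K} f - \inf_K f \ge 0$, so that $\Osc{K} f - \Osc{\partial K} f = A + B$. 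It therefore suffices to prove $A \le (a/\lambda)\diam(K)+\lambda$ and $B \le (a/\lambda)\diam(K)+\lambda$. The optimization in $\lambda$ and the final assembly then run verbatim and give $C_K = 16\diam(K)^2$.

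\textbf{Step 1} is purely local and holds for any $f$. \textbf{Step 2} needs the level sets $\Sigma_z\cap K$ to be unions of closed loops, which now fails, because $\Sigma_z$ may cross $\partial K$. To handle this, I would use the version of Lemma~\ref{lem:ABP-1D-loop} that already allows components with boundary. Components of $\Sigma_z \cap K$ are either closed loops or arcs ending on $\partial K$. To keep things smooth, I would first replace $K$ by a slightly larger compact set with smooth boundary on which $\Sigma_z$ is transversal for a.e.\ $z$; this is a Sard-type argument applied to $f_{x_2}|_{\partial K'}$. Then I would let $K'\downarrow K$ at the end. On each arc component the estimate $\Vosc{}\, S_z \le \diam(K)\,\TV(f_{x_1})$ is exactly Lemma~\ref{lem:ABP-1D-loop}, so \eqref{ineq:Step2} persists with $\phi$ summing over all components.

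\textbf{Step 3.} In the degenerate case $\Zhat = E_\lambda$, the vertical segment from $q\in\partial K$ to $x^*$ gives $f(x^*) - f(q) \le z^*\diam(K)$. Since $f(q)\le\sup_{\partial K}f$, this yields $A \le (a/\lambda)\diam(K)$. In the main case we pick $z_0$ and the coloring $c$ as before, and invoke a version of Lemma~\ref{lem:path-existence} for a $\Sigma_{z_0}$ whose components may be arcs with endpoints on $\partial K$. This should give a path $\Gamma$ from some $q\in\partial K$ to $x^*$ made of admissible pieces, each component visited at most once. Summing the increments as before gives $f(x^*) - f(q) \le \phi(z_0) + |z_0|\diam(K)$. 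The bound $f(q) \le \sup_{\partial K} f$ then gives the estimate for $A$. The same argument applied to $-f$ handles $B$.

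The \textbf{main obstacle} is the topological Lemma~\ref{lem:path-existence} in this generalized setting. Crossing numbers, the binary coloring, and orientations were defined via the bounded/unbounded sides of closed loops, and these are no longer available for arcs. My first attempt would avoid reproving the lemma. I would extend $f$ outside $K$ to a function $\tilde f$ on a larger ball $K''$ with $\tilde f_{x_2}$ having regular level sets that close up into loops. The extension would be chosen so that the extra contribution of $\int_{K''\setminus K}|\det D^2\tilde f|$ is negligible in the limit. This is doubtful in general. Failing that, I would treat arcs directly. The coloring would be defined by the sign of $f_{x_2}-z_0$, which is what the proof of Lemma~\ref{lem:change-over-vertical-admissible-segments} actually uses. $G(x)$ would be redefined on an arc as the extreme point of the arc above or below $x$. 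The path would be built backward from $x^*$ by alternately following vertical segments in the correct colored region and maximal arc pieces. A path that reaches an arc touching $\partial K$ is allowed to terminate at $\partial K$ itself. Termination would follow because each component is used once and there are finitely many components.
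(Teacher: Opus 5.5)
Your reduction to bounding $A$ and $B$, your Steps 0--2, and the degenerate case are fine. Step 2 with arcs is covered by Lemma~\ref{lem:ABP-1D-loop}. Step 3, however, has a genuine gap, and the gap is analytic rather than merely topological.

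\textbf{The gap.} The vertical oscillation controls only increments of $S_{z_0}$ between vertically aligned points. If you drop $p_1(x)=p_1(w)$ in the integration by parts of Lemma~\ref{lem:ABP-1D-loop}, a term $f_{x_1}(w)\,(p_1(x)-p_1(w))$ remains, and no total-variation bound controls it. The piece of $\Gamma$ that you allow to ``terminate at $\partial K$'' runs along an arc $J$ from an endpoint $q\in\partial K$ to a point $x$. In general $x$ is not vertically aligned with $q$, so the cost $S_{z_0}(x)-S_{z_0}(q)$ is not bounded by $\Vosc{J}S_{z_0}$, and hence not by your $\phi(z_0)$.

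\textbf{Such pieces cannot be avoided.} Suppose $J$ is a graph over the $x_1$-axis. Then no other point of $J$ lies on the vertical line through the point $x$ where the backward path lands. Moreover, no admissible vertical segment ends at any point of $J$, because in both adjacent colored regions the admissible backward direction points toward $J$. So the path is forced along $J$ to $\partial K$.

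\textbf{Counterexample.} Take $K=\overline{B(0,r)}$ and $f=-\tfrac12x_2^2-\epsilon x_1^2$ with $0<\epsilon<\tfrac12$.
\begin{itemize}
\item Every $\Sigma_z$ is a horizontal chord, so every vertical oscillation vanishes and your $\phi\equiv0$.
\item Your Step 3 would then give $\sup_K f-\sup_{\partial K}f\le 2r\,a/\lambda+\lambda$ for all $a,\lambda>0$, hence $\le 0$.
\item In fact $\sup_K f-\sup_{\partial K}f=\epsilon r^2$.
\end{itemize}
Here $\det D^2f=2\epsilon$ and $\TV_z(f_{x_1})>0$. So the information is still present after Step 1; it is discarded only when you pass to $\Vosc{}$.

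\textbf{What is missing.} You need to change $\phi$ so that it pays for these non-vertical arc pieces. You also need a rule for choosing the boundary endpoint $q$ and the direction of travel. This is what the paper's $\Phi_2$, $\Phi_3$ and admissible path types (iv)--(vi) accomplish.
\begin{itemize}
\item If $f_{x_1}$ vanishes at some $\gamma(s^*)\in J$, Lemma~\ref{lem:ABP-1D-curve-vanishing} anchors the integration by parts at $s^*$. This kills the boundary term and gives $\sup_J S_z-S_z(q)\le\Haus{1}(p_1(J))\int_J\abs{\partial_s f_{x_1}}$ for a suitable endpoint $q$; this is the term $\Phi_3$.
\item If $f_{x_1}$ has a fixed sign on $J$, then $\frac{d}{ds}S_{z_0}(\gamma(s))=f_{x_1}(\gamma(s))\,(p_1\circ\gamma)'(s)$. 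On each $p_1$-monotone branch between folding points you can therefore travel in the direction in which $S_{z_0}$ does not increase (type (vi), via $\hat H$) at no cost.
\item The vertically aligned jumps between such branches are charged to the partition sums of Lemma~\ref{lem:ABP-1D-curve-nonvanishing}; this is the term $\Phi_2$.
\end{itemize}
In the example above, $f_{x_1}=-2\epsilon x_1$ vanishes on each chord, and $\Phi_3(z)=\epsilon(r^2-z^2)$ is exactly the cost of the forced arc piece that your $\phi$ misses.
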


Before proving Theorem~\ref{thm:ABP-n2-boundary}, we need to do a bit more preparatory work. We start with a definition. If $J$ is a smooth embedded curve with arc length parametrization $\gamma \colon [0, L] \to \R^2$, for each $x \in J$ there exists a unique $s_x \in [0, L]$ such that $\gamma(s_x) = x$. Define $\hat{G} \colon J \to J$ by
\[
    \hat{G}(x) \coloneqq \gamma(\sigma_x), 
\]

\noindent where
\[ \sigma_x \coloneqq \min\set{s_x, \,\inf\set{s >0 \given p_1(\gamma(s)) = p_1(x)}}.
\] 

\noindent Note that $\hat{G}$ depends on the orientation of $J$ induced by the direction of parametrization. We now prove the following corollary of Lemma~\ref{lem:ABP-1D-loop}.

\begin{lemma}[ABP on a smooth curve with non-vanishing $x_1$-derivative condition]\label{lem:ABP-1D-curve-nonvanishing}
    Suppose that $J$ is a smooth, compact, and connected one-dimensional submanifold of $\R^2$ with boundary. Parametrize $J$ by a curve $\gamma(s)$ with $\abs{\gamma'(s)} = 1$ for all $s \in [0, L]$, where $L = \Haus{1}(J)$. Suppose that $u \in C^2(\R^2)$ and $J \subseteq u_{x_2}^{-1}(0) \cap u_{x_1}^{-1}(\R \setminus \set{0})$. Let $\mathcal{P}$ be the set of partitions of $[0, L]$ of the form
    \[
        s_1 \geq t_1 \geq s_2 \geq t_2 \geq \cdots \geq s_k \geq t_k,
    \]
    where $t_i$ are chosen such that $\hat{G}(\gamma(s_i)) = \gamma(t_i)$. Then
    \[
        \sup_{\mathcal{P}} \sum_{i} \bigl[u(\gamma(s_i)) - u(\gamma(t_i))\bigr]
        \leq \Haus{1}(p_1(J)) \int_{0}^{L} \abs{\dv{s} u_{x_1}(\gamma(s))} \d s.
    \]
\end{lemma}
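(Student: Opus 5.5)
Since the statement is described as a corollary of Lemma~\ref{lem:ABP-1D-loop}, the plan is to run the same integration-by-parts computation on each piece of the partition. Then we sum the pieces, using that the parameter intervals $[t_i,s_i]$ are pairwise non-overlapping, so that the total variation integrals add up to at most the full integral over $[0,L]$.

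Fix a partition $s_1\ge t_1\ge s_2\ge\cdots\ge s_k\ge t_k$ in $\mathcal{P}$. By definition of $\hat G$, each pair satisfies $p_1(\gamma(t_i))=p_1(\gamma(s_i))$. This holds both when $\sigma=s_x$, i.e.\ $t_i=s_i$ and the term is trivially zero, and when $\sigma$ is the infimum of the times $s>0$ at which the $p_1$-projection returns to $p_1(x)$. In the latter case the infimum is attained by continuity of $\gamma$, and it is at most $s_i$, which gives $t_i\le s_i$. For each $i$ we repeat the computation in the proof of Lemma~\ref{lem:ABP-1D-loop} with $s_1,s_2$ replaced by $t_i,s_i$ and with $x=\gamma(s_i)$. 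Using $u_{x_2}=0$ on $J$ and the equality of the $p_1$-projections at the two endpoints, we obtain
\[
u(\gamma(s_i))-u(\gamma(t_i))=\int_{t_i}^{s_i}\dv{s}u_{x_1}(\gamma(s))\,p_1\bigl(\gamma(s_i)-\gamma(s)\bigr)\d s .
\]
Since both points lie in $J$, we have $\abs{p_1(\gamma(s_i)-\gamma(s))}\le \Haus{1}(p_1(J))$. Here $p_1(J)$ is an interval because $J$ is connected, so its length bounds the distance between any two of its points. The $i$-th term is therefore at most $\Haus{1}(p_1(J))\int_{t_i}^{s_i}\abs{\dv{s}u_{x_1}(\gamma(s))}\d s$.

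Summing over $i$, the intervals $[t_i,s_i]$ overlap at most at endpoints because of the ordering of the partition. The sum of the integrals is therefore at most $\int_0^L\abs{\dv{s}u_{x_1}(\gamma(s))}\d s$, and taking the supremum over $\mathcal{P}$ gives the claim. The hypothesis $u_{x_1}\neq0$ on $J$ is not needed for the inequality itself; I expect it only serves later, for instance to ensure that $p_1\circ\gamma$ behaves well and that the partitions are finite.

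The main obstacle is the bookkeeping around $\hat G$. One must check that $t_i$ is well defined and satisfies $t_i\le s_i$ with matching $p_1$-coordinate, including the degenerate case in which the infimum is taken over the empty set, where $\sigma_x=s_x$ and the term vanishes. One must also check that the ordering in $\mathcal{P}$ really prevents double counting. Once this is settled, the estimate is just Lemma~\ref{lem:ABP-1D-loop} applied piecewise.
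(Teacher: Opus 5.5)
Your proposal is correct and follows the paper's proof: integrate by parts on each $[t_i,s_i]$ exactly as in Lemma~\ref{lem:ABP-1D-loop}, using $p_1\gamma(t_i)=p_1\gamma(s_i)$ and $u_{x_2}=0$ on $J$, then sum over the non-overlapping intervals and take the supremum over partitions. The only difference is that the paper bounds each piece by the slightly sharper factor $\Haus{1}(p_1(\gamma[t_i,s_i]))$ where you use $\Haus{1}(p_1(J))$; this does not affect the final estimate.
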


\begin{proof}[Proof of Lemma~\ref{lem:ABP-1D-curve-nonvanishing}]
    For each $i$, by definition of $\hat{G}$ we see that $\gamma(t_i)$ lies vertically above or below $\gamma(s_i)$. Similarly to Lemma~\ref{lem:ABP-1D-loop}, we can use integration by parts to show
    \[
        u(\gamma(s_i)) - u(\gamma(t_i))
        \leq \Haus{1}(p_1(\gamma[t_i, s_i])) \int_{t_i}^{s_i} \abs{\dv{s} u_{x_1}(\gamma(s))} \d s.
    \]
    The intervals $(t_i, s_i)$ are disjoint by nature of the partition $\mathcal{P}$, so the result follows by summing in $i$ and taking the supremum over all partitions.
\end{proof}

We shall also make use of the following lemma.

\begin{lemma}[ABP on a smooth curve with vanishing $x_1$-derivative condition]\label{lem:ABP-1D-curve-vanishing}
    Suppose that $J$ is a smooth, compact, and connected one-dimensional submanifold of $\R^2$ with boundary. Parametrize $J$ by a curve $\gamma(s)$ with $\abs{\gamma'(s)} = 1$ for all $s \in [0, L]$, where $L = \Haus{1}(J)$. Suppose that $u \in C^2(\R^2)$, $J \subseteq u_{x_2}^{-1}(0)$, and $J \cap u_{x_1}^{-1}(0) \neq \varnothing$. Then there exists $q \in \set{\gamma(0), \gamma(L)}$ such that
    \[
        \sup_J u - u(q) \leq \Haus{1}(p_1(J)) \int_{0}^{L} \abs{\dv{s} u_{x_1}(\gamma(s))} \d s.
    \]
\end{lemma}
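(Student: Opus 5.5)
Let $x_M\in J$ be a point with $u(x_M)=\sup_J u$; it exists because $J$ is compact. Let $x_0=\gamma(s_0)\in J$ be a point where $u_{x_1}(x_0)=0$, which the hypothesis provides. The idea is to integrate along $\gamma$ from an endpoint $q$ to $x_M$, using $u_{x_2}=0$ on $J$, and to integrate by parts as in Lemma~\ref{lem:ABP-1D-loop}. This time we subtract the vanishing value $u_{x_1}(x_0)$ instead of matching vertical endpoints. The choice of $q$ is the free parameter. Writing $x_M=\gamma(s_M)$, we take $q=\gamma(0)$ if $s_M\geq s_0$ and $q=\gamma(L)$ if $s_M< s_0$. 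This ensures that the parameter interval between $q$ and $x_M$ lies on one side of $s_0$. We treat the first case; the second is symmetric.

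For any constant $c$, the chain rule and $u_{x_2}(\gamma(s))=0$ give
\[
u(x_M)-u(q)=\int_0^{s_M} u_{x_1}(\gamma(s))\,(p_1\gamma)'(s)\d s=\int_0^{s_M} u_{x_1}(\gamma(s))\,\bigl(p_1\gamma(s)-c\bigr)'\d s .
\]
Integrating by parts yields boundary terms $u_{x_1}(\gamma(s_M))(p_1\gamma(s_M)-c)-u_{x_1}(\gamma(0))(p_1\gamma(0)-c)$ and the integral $-\int_0^{s_M}\dv{s}u_{x_1}(\gamma(s))\,(p_1\gamma(s)-c)\d s$. We handle the boundary terms as follows. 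Since $x_M$ maximizes $u$ over $J$, either $x_M$ is interior to $J$, in which case $\dv{s}u(\gamma(s_M))=u_{x_1}(p_1\gamma)'=0$, or $x_M$ is an endpoint. In the endpoint case with $s_M=0$ we have $q=x_M$ and there is nothing to prove. If instead $s_M=L$, the case choice forces $s_0\le L=s_M$, and we argue differently below.

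A more uniform route avoids these endpoint cases. Write $u_{x_1}(\gamma(s))=\int_{s_0}^{s}\dv{t}u_{x_1}(\gamma(t))\d t$, which is valid because $u_{x_1}(\gamma(s_0))=0$. Choose the endpoint $q$ so that the interval between $q$ and $x_M$ in parameter space does not contain $s_0$ in its interior, or else split the interval at $s_0$. Then
\[
u(x_M)-u(q)=\int\!\!\int \dv{t}u_{x_1}(\gamma(t))\,(p_1\gamma)'(s)\d t\d s ,
\]
and Fubini applies. For each fixed $t$, the $s$-integral of $(p_1\gamma)'$ over the set of $s$ lying beyond $t$ (relative to $s_0$) is a difference of two $p_1$-values of points on $J$. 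Its absolute value is therefore at most $\Haus{1}(p_1(J))$, since $J$ is connected and $p_1(J)$ is an interval. This gives the bound $\Haus{1}(p_1(J))\int_0^L\abs{\dv{t}u_{x_1}}\d t$ regardless of where $x_M$ sits.

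The main obstacle is the bookkeeping in choosing $q$ so that the Fubini region really has the claimed form. With $q=\gamma(0)$ and $s_0\le s_M$, split the path $[0,s_M]$ at $s_0$. On the piece $[s_0,s_M]$, the $s$-integral over $t\le s\le s_M$ gives $p_1\gamma(s_M)-p_1\gamma(t)$. On the piece $[0,s_0]$, the signs reverse and the $s$-integral over $0\le s\le t$ gives $p_1\gamma(t)-p_1\gamma(0)$. The two $t$-ranges, $[0,s_0]$ and $[s_0,s_M]$, are disjoint. In each, the weight multiplying $\abs{\dv{t}u_{x_1}}$ is bounded by the width of $p_1(J)$, so summing gives the claim with the single constant $\Haus{1}(p_1(J))$. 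The case $s_M<s_0$ is handled symmetrically with $q=\gamma(L)$. Since every choice works this way, any endpoint suffices; the statement only asserts existence.
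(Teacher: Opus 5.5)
Your argument is correct and is essentially the paper's proof. It uses the same choice of endpoint $q$ according to whether $s_M\ge s_0$ and the same split of the parameter interval at the zero $s_0$ of $u_{x_1}\circ\gamma$, and your Fubini computation yields exactly the weights $p_1\gamma(s_M)-p_1\gamma(t)$ and $p_1\gamma(t)-p_1\gamma(0)$ on the disjoint $t$-ranges that the paper obtains by integrating by parts. One slip: with your choice of $q$, the parameter interval between $q$ and $x_M$ contains $s_0$ rather than lying on one side of it, which is exactly why the split is needed; your closing remark that either endpoint works is nonetheless true, by the same Fubini bound.
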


\begin{proof}[Proof of Lemma~\ref{lem:ABP-1D-curve-vanishing}]
    Take any $x \in J$ and select $s_1 \in [0, L]$ such that $\gamma(s_1) = x$. By hypothesis, there exists $s^* \in [0, L]$ such that $u_{x_1}(\gamma(s^*)) = 0$. Set
    \[
        s_0 \coloneqq
        \begin{cases}
            0 & \text{if } s^* \leq s_1, \\
            L & \text{if } s^* > s_1,
        \end{cases}
    \]
    and $q \coloneqq \gamma(s_0)$. Note that $s^*$ lies between $s_0$ and $s_1$, so the intervals $(\min\set{s_0, s^*},\, \max\set{s_0, s^*})$ and $(\min\set{s_1, s^*},\, \max\set{s_1, s^*})$ are disjoint. Using $u_{x_2}(\gamma(s)) = 0$ for all $s$ and $u_{x_1}(\gamma(s^*)) = 0$, integrating by parts:
    \begin{align*}
        u(x) - u(q)
        &= \int_{s_0}^{s^*} u_{x_1}(\gamma(s))\, (p_1 \gamma)'(s) \d s \\
        &\qquad + \int_{s^*}^{s_1} u_{x_1}(\gamma(s))\, (p_1 \gamma)'(s) \d s \\
        &= \bigl[-u_{x_1}(\gamma(s_0)) + u_{x_1}(\gamma(s^*))\bigr]\, p_1 \gamma(s_0) \\
        &\qquad - \int_{s_0}^{s^*} \dv{s} u_{x_1}(\gamma(s))\, p_1 \gamma(s) \d s \\
        &\qquad + \bigl[u_{x_1}(\gamma(s_1)) - u_{x_1}(\gamma(s^*))\bigr]\, p_1 \gamma(s_1) \\
        &\qquad - \int_{s^*}^{s_1} \dv{s} u_{x_1}(\gamma(s))\, p_1 \gamma(s) \d s \\
        &= \int_{s_0}^{s^*} \dv{s} u_{x_1}(\gamma(s))\, p_1[\gamma(s_0) - \gamma(s)] \d s \\
        &\qquad - \int_{s^*}^{s_1} \dv{s} u_{x_1}(\gamma(s))\, p_1[\gamma(s_1) - \gamma(s)] \d s \\
        &\leq \Haus{1}(p_1(J)) \int_{0}^{L} \abs{\dv{s} u_{x_1}(\gamma(s))} \d s.
    \end{align*}
    We conclude by taking the supremum over $x \in J$.
\end{proof}

Let us proceed with a definition. Let $K$ be a compact set in $\R^2$, and let $J$ be a smooth one-dimensional submanifold of $\R^2$ with boundary such that $J \subseteq K$ and $\partial J \subseteq \partial K$. Suppose that $\gamma \colon [0, L] \to K$ is a smooth arc-length parametrization of $J$. We call $s \in (0, L)$ a \emph{folding point} of $\gamma$ if:
\begin{itemize}
    \item $s$ is a critical point of $p_1(\gamma)$,
    \item $s^+ \coloneqq \inf_{\sigma > s} \set{p_1(\gamma'(\sigma)) \neq 0} < L$,
    \item $s^- \coloneqq \sup_{\sigma < s} \set{p_1(\gamma'(\sigma)) \neq 0} > 0$, and
    \item $p_1(\gamma'(s^+ + \epsilon))\, p_1(\gamma'(s^- - \epsilon)) < 0$ for sufficiently small $\epsilon > 0$.
\end{itemize}
Note that generically (for isolated critical points of $p_1(\gamma)$) we have $s^+ = s^- = s$. In case we want to talk about the image of the curve $\gamma$, we also call $x \in J$ a \emph{folding point} if there exists a smooth arc-length parametrization of $J$ such that $x = \gamma(s^*)$ for some folding point $s^*$. For example, the point $(0, 0) \in \R^2$ is a folding point on the curve $x = y^2$, but it is only a critical point that is not a folding point on the curve $x = y^3$.

For $x \in J$, let $s_x \in [0,L]$ be such that  $\gamma(s_x) = x$. Now define a mapping $\hat{H} \colon J \to \set{\text{folding points}} \cup (\partial K)$ by
\[
    \hat{H}(x) \coloneqq
        \gamma(\tau_x),
\]
where 
\[\tau_x = \max\set{0, \sup\set{s < s_x \given s \text{ is a folding point of } \gamma}}.
\]
\noindent Note that $\hat{H}$ depends on the orientation of $J$ induced by the direction of parametrization. Finally, for $x, y \in J$, let $\gamma(x, y)$ be the arc of $J$ such that $\gamma(x, y)(s_x) = x$ and $\gamma(x, y)(s_y) = y$, where $s_x \leq s_y$.

Next we need a topological existence result akin to Lemma~\ref{lem:path-existence} which accounts for curves which meet the boundary of $K$. Suppose that
\[
    \Sigma = \bigsqcup_{i=1}^{m} \Omega_i \;\sqcup\; \bigsqcup_{i=1}^{d} J_i,
\]
where $m$ and $d$ are positive integers, $\Omega_i$ are simple closed curves, and $J_i$ are simple curves intersecting $\partial K$. Fix a coloring $c$ of $K \setminus \Sigma$ and a continuous function $F \colon K \to \R$. Let $G \colon \bigsqcup_{i=1}^{m} \Omega_i \to \bigsqcup_{i=1}^{m} \Omega_i$ be the same function as in the compactly supported case. A path $\alpha$ in $K$ shall be called \emph{$(\Sigma, c, F)$-admissible} if any one of the following statements is true:
\begin{enumerate}[label=(\roman*)]
    \item The path $\alpha$ equals $\ell(x, y)$ for some $y \prec x$ such that $\ell(x, y)^\circ \subseteq c^{-1}(+1)$.
    \item The path $\alpha$ equals $\ell(x, y)$ for some $y \succ x$ such that $\ell(x, y)^\circ \subseteq c^{-1}(-1)$.
    \item The path $\alpha$ equals $\gamma(G(x), x)$ for some $x \in \Omega_i$ and $i \in \set{1, \ldots, m}$.
    \item The path $\alpha$ equals $\gamma(\hat{G}(x), x)$ for some $x \in J_i$ and $i \in \set{1, \ldots, d}$.
    \item The path $\alpha$ equals $\gamma(q, x)$ for some $x \in J_i$, $q \in J_i \cap (\partial K)$, and $i \in \set{1, \ldots, d}$, such that $F(y) = 0$ for some $y \in J_i$ which lies between $x$ and $q$ (meaning $s_x \leq s_y \leq s_q$ or $s_q \leq s_y \leq s_x$).
    \item The path $\alpha$ equals $\gamma(\hat{H}(x), x)$ for some $x \in J_i$ and $i \in \set{1, \ldots, d}$ such that $F$ does not vanish on $J_i$, and $\gamma(\hat{H}(x), x)$ is parametrized so that
    \[
        F(\gamma(\hat{H}(x), x)(s))\, p_1\bigl(\gamma(\hat{H}(x), x)'(s)\bigr) \leq 0.
    \]
\end{enumerate}

It is worth explaining the sixth type of admissible path. Proceeding in reverse, such a curve goes from $x$ to the nearest folding point (or the boundary of $K$ if there is none). By continuity, $F$ must have a definite sign on $J_i$, and since $\gamma(\hat{H}(x), x)$ lies between turning points, $p_1(\gamma(\hat{H}(x), x)'(s))$ also has a definite sign, so we can choose the parametrization which satisfies the monotonicity condition $F(\gamma)\, p_1(\gamma') \leq 0$.

\begin{lemma}\label{lem:path-existence-boundary}
    Suppose that
    \[
        \Sigma = \bigsqcup_{i=1}^{m} \Omega_i \;\sqcup\; \bigsqcup_{i=1}^{d} J_i,
    \]
    where $m$ and $d$ are positive integers, $\Omega_i$ are smooth, simple closed curves in the interior of a compact set $K \subset \R^2$, and $J_i$ are smooth curves intersecting $\partial K$. For all $x^* \in K \setminus \Sigma$, for each binary coloring $c \in \set{c_{0, x^*}, c_{1, x^*}}$ of $K \setminus \Sigma$, and for every continuous function $F \colon K \to \R$, there exists $q \in \partial K$ and a piecewise-smooth path $\Gamma = \alpha_1 * \alpha_2 * \cdots * \alpha_b$ for some finite integer $b$ and $(\Sigma, c, F)$-admissible paths $\alpha_i$, with the following properties:
    \begin{enumerate}[label=(\arabic*)]
        \item The initial point of $\Gamma$ is $q$ and the final point of $\Gamma$ is $x^*$.
        \item If $\Gamma$ exits a given connected component of $\Sigma$, then $\Gamma$ never returns to that same component. (Formally, if $B = \Omega_i$ or $B = J_i$ for some $i$, then $\Gamma(s_1) \in B$ and $\Gamma(s_1 + \epsilon) \notin B$ for small $\epsilon > 0$ implies that $\Gamma(s) \notin B$ for all $s > s_1$.)
    \end{enumerate}
\end{lemma}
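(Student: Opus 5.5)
The plan is to build $\Gamma$ backwards from $x^*$ by a greedy procedure, as in the proof of Lemma~\ref{lem:path-existence} in the Appendix. At each stage we keep a current point $y$, initially $x^*$, together with the set $\mathcal{V}$ of components of $\Sigma$ already used. We prepend one admissible path whose initial point is a new point $y'$, and we stop once $y' \in \partial K$. The basic move from a point $y \in K \setminus \Sigma$ is a vertical segment. If $c(y) = +1$, we travel upward from $y$, which reverses a segment of type~(i). If $c(y) = -1$, we travel downward, which reverses type~(ii). In either case we stop at the first point where the segment meets $\Sigma \cup \partial K$. Since $K$ is compact, one of these sets is hit. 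If it is $\partial K$, we set $q$ to be that point and we are done. Otherwise the segment lands on some point $w$ of a component $B$.

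Suppose first that $B = \Omega_i$ is a closed loop. We apply the type~(iii) move from the compactly supported case: jump along $\Omega_i$ to $G(w)$ and then leave $\Omega_i$ vertically on the side dictated by its $c$-sign. Because $G(w)$ is the extreme point of $\Omega_i$ on the vertical line through $w$, the next vertical segment leaves the region enclosed by $\Omega_i$ and never meets $\Omega_i$ again. Now suppose $B = J_i$ is an arc reaching $\partial K$, and split into two cases according to $F$.

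\begin{itemize}
\item If $F$ vanishes somewhere on $J_i$, use a type~(v) path: follow $J_i$ from $w$ through a zero of $F$ to an endpoint $q \in J_i \cap \partial K$. This terminates the construction.
\item If $F$ has a fixed sign on $J_i$, use a type~(vi) path: follow $J_i$ in the direction allowed by the monotonicity condition until reaching $\hat{H}(w)$. That point is either on $\partial K$, where we terminate, or is a folding point. At a folding point, apply $\hat{G}$ (type~(iv)) or leave $J_i$ vertically, choosing the side on which the color allows motion away from $J_i$.
\end{itemize}

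The key invariant, which gives Property~(2), is that after leaving a component $B$ the path stays in a connected component of $K \setminus B$ that does not contain the earlier pieces of the path. For a loop $\Omega_i$ this is the Jordan curve theorem together with the extremality of $G$. For an arc $J_i$ joining boundary points, $J_i$ separates $K$ locally. We need to show that the exit point chosen (a folding point or a $\hat{G}$-image) lies on the far side of the vertical line. The reason is that at a folding point $p_1$ is locally extremal along $J_i$, so a vertical ray from it points away from $J_i$.

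Termination then follows from two facts. There are finitely many components, each visited at most once. Between components, only one vertical segment is used.

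I expect the main obstacle to be the arcs $J_i$ in the nonvanishing-$F$ case. There the direction along $J_i$ is forced by the sign of $F$ rather than chosen freely. So we must check that the resulting folding point has the correct color on its exterior side for the outgoing vertical move, or else that $\hat{G}$ supplies an alternative exit. My first approach would be a case analysis on the sign of $F$, the orientation of the fold (opening left or right), and the coloring $c$ near the fold, using that colors alternate across $J_i$. If some case fails, I would fall back on continuing along $J_i$ to $\partial K$ and accepting termination there.
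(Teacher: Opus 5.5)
Your backward greedy recursion is essentially the paper's (its cases (1)--(6), which the paper runs region by region through $D_0,D_1,\dots$). Your handling of loops, and of arcs on which $F$ vanishes (a type~(v) piece, which terminates), is fine. The gap is exactly the case you flagged, an arc $J_i$ with $F\neq0$, and the fallback you propose there is not available.

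Say $F>0$ on $J_i$. A type~(vi) piece, traversed from $\hat H(x)$ to $x$, satisfies $F\,p_1(\gamma')\le 0$, so $p_1$ is nonincreasing along it. A type~(iv) piece joins two points with the same $p_1$, and type~(v) is excluded. Hence along any chain of admissible pieces lying in $J_i$, read forward, the values of $p_1$ at the breakpoints are nonincreasing. Now take $J_i$ to be a cap whose two endpoints on $\partial K$ both have smaller $p_1$ than $w$. Then no such chain joins $\partial K$ to $w$. So ``continuing along $J_i$ to $\partial K$'' is impossible, and $\Gamma$ must leave $J_i$ through the interior of $K$. The case analysis you postponed is therefore the heart of the lemma, not a detail.

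Second, Property~(2) is not established. At a nondegenerate fold both vertical half-lines lie locally on the same (convex) side of $J_i$. So the color dictates the exit direction---there is no side to choose---and ``the vertical ray points away from $J_i$'' is only a local statement. A fold reached by a type~(vi) move need not be extreme on its vertical line.

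For example, take an arc shaped like the digit ``3'', with both ends on $\partial K$ to the left and color $+1$ on its right, and $F>0$. A type~(vi) move from the lower branch of the lower lobe leads to the lower tip. The upward ray from that tip crosses the notch and hits the lower branch of the upper lobe, so $\Gamma$ returns to $J_i$. You would have to take a $\hat G$-jump there instead, and you give neither a rule for this nor a proof that some choice always works.

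Your invariant is also false as stated, even for loops. When $\Omega_i$ is a hole of the current region, $\Gamma$ enters and leaves it from the same component of $K\setminus\Omega_i$. In the loops-only lemma, non-return really comes from three facts:
\begin{enumerate}[label=(\alph*)]
\item all breakpoints lie on the single line $\set{p_1=p_1(x^*)}$;
\item the vertical direction is constant on each region;
\item the path only moves outward through the nesting (the stopping set $\partial(E_k^\infty)\cup\partial K$).
\end{enumerate}
Moves along arcs change $p_1$. So the extremality of $G(w)$ on one vertical line no longer rules out a later return to a loop or arc already visited.

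A complete argument needs two things you have not supplied: an explicit exit rule on arcs with $F\neq0$, and a global monotone quantity that yields both termination and Property~(2).
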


See the Appendix for a proof of Lemma~\ref{lem:path-existence-boundary}, as well as pictures.

\subsection{Proof of ABP for \texorpdfstring{$n=2$}{n=2}, general case}

\begin{proof}[Proof of Theorem~\ref{thm:ABP-n2-boundary}]

\textbf{Steps 0 and 1.}\quad
Step 0 and Step 1 are similar to the proof in the compactly supported case, but we replace $\Sigma_z$ with $f_{x_2}^{-1}(z) \cap K$.

\medskip\noindent\textbf{Step 2.}\quad
As before, let $z^* \coloneqq \sup_K f_{x_2}$, $z_* \coloneqq \inf_K f_{x_2}$, and $\Zhat \coloneqq \Zreg \cap [z_*, z^*] \setminus \set{0}$. Now $f_{x_2}$ does not generally vanish on the boundary, so
\[
    \Sigma_z = \bigsqcup_{i=1}^{m_z} \Omega_{z, i} \;\sqcup\; \bigsqcup_{i=1}^{d_z} J_{z, i},
\]
where $m_z$ and $d_z$ are positive integers, $\Omega_{z, i}$ are  simple closed curves, and $J_{z, i}$ are curves intersecting $\partial K$.

For each $z \in \Zhat$, define $S_z(x) \coloneqq f(x) - z\, p_2(x)$ for $x \in K$. Note that $S_z$ and the components $\Omega_{z, i}$ satisfy the hypotheses of Lemma~\ref{lem:ABP-1D-loop}. Let
\[
    \mathcal{I}_{z} \coloneqq \set{i \in \set{1, \ldots, d_z} \given f_{x_1}(x) = 0 \text{ for some } x \in J_{z, i}}.
\]
For $i \in \mathcal{I}_{z}$, the curve $J_{z, i}$ satisfies the hypotheses of Lemma~\ref{lem:ABP-1D-curve-vanishing}; let $q_{z, i} \in \partial K$ be the boundary point provided by that lemma. For each $i \notin \mathcal{I}_z$, let $\mathcal{P}_{z, i}$ be the set of partitions of the parameter domain for $J_{z, i}$ of the form $s_1 \geq t_1 \geq s_2 \geq t_2 \geq \cdots \geq s_k \geq t_k$, where $t_j$ are chosen such that $\hat{G}(\gamma(s_j)) = \gamma(t_j)$, as in Lemma~\ref{lem:ABP-1D-curve-nonvanishing}. For $z \in \Zhat$, set
\[
    \begin{aligned}
        \Phi_1(z) &\coloneqq \sum_{i=1}^{m_z} \Vosc{\Omega_{z, i}} S_z, \\
        \Phi_2(z) &\coloneqq \sum_{i \notin \mathcal{I}_z} \sup_{\mathcal{P}_{z, i}} \sum_{j} \bigl[S_z(\gamma(s_j)) - S_z(\gamma(t_j))\bigr], \\
        \Phi_3(z) &\coloneqq \sup_{i \in \mathcal{I}_z} \bigl[\sup_{J_{z, i}} S_z - S_z(q_{z, i})\bigr].
    \end{aligned}
\]
Define $\phi \colon \R \to [0, \infty)$ by
\[
    \phi(z) \coloneqq
    \begin{cases}
        \Phi_1(z) + \Phi_2(z) + \Phi_3(z) & \text{if } z \in \Zhat, \\[4pt]
        0 & \text{otherwise.}
    \end{cases}
\]
We claim that
\begin{equation}\label{ineq:Step2-general}
    \boxed{\phi(z) \leq \diam(K)\, \TV_z(f_{x_1}) \quad \text{for all } z \in \R.}
\end{equation}
The claim is trivial for $z \notin \Zhat$, so suppose $z \in \Zhat$. By Lemma~\ref{lem:ABP-1D-loop}, for $1 \leq i \leq m_z$,
\[
    \Vosc{\Omega_{z, i}} S_z
    \leq \Haus{1}(p_1(\Omega_{z, i})) \int_{\Omega_{z, i}} \abs*{\dv{s} f_{x_1}} \d \Haus{1}(s).
\]
Lemma~\ref{lem:ABP-1D-curve-nonvanishing} implies that for $i \notin \mathcal{I}_z$,
\[
    \sup_{\mathcal{P}_{z, i}} \sum_{j} \bigl[S_z(\gamma(s_j)) - S_z(\gamma(t_j))\bigr]
    \leq \Haus{1}(p_1(J_{z, i})) \int_{J_{z, i}} \abs*{\dv{s} f_{x_1}} \d \Haus{1}(s).
\]
Finally, for $i \in \mathcal{I}_z$, Lemma~\ref{lem:ABP-1D-curve-vanishing} ensures
\[
    \sup_{J_{z, i}} S_z - S_z(q_{z, i})
    \leq \Haus{1}(p_1(J_{z, i})) \int_{J_{z, i}} \abs*{\dv{s} f_{x_1}} \d \Haus{1}(s),
\]
and we finish by summing in $i$.

\medskip\noindent\textbf{Step 3.}\quad
We show that
\begin{equation}\label{ineq:Step3-general}
    \boxed{\bigl(\Osc{K} f - \Osc{\partial K} f\bigr)^2 \leq 16\, \diam(K) \int_{-\infty}^{\infty} \phi(z) \d z}
\end{equation}
by estimating the supremum and infimum separately, using our new definition of $\phi$ which accounts for connected components which meet $\partial K$. We only provide details for the supremum. Without loss of generality, $\sup_K f > \sup_{\partial K} f$, so there exists $x^*$ interior to $K$ such that $f(x^*) = \sup_K f$. Define $E_\lambda$ and choose $z_0 \in \Zhat$ as in Step~3 of the compactly supported case. Fix a coloring $c$ for $K \setminus \Sigma_{z_0}$ according to the sign of $z_0$ as before. Let $\Gamma$ be the path from some $q \in \partial K$ to $x^*$ provided by Lemma~\ref{lem:path-existence-boundary} applied to $F = f_{x_1}$, and let $x_0 = q,\, x_1, \ldots, x_{k-1},\, x_k = x^*$ be the endpoints of each admissible segment of $\Gamma$. Then
\[
    f(x^*) - f(q) = \sum_{j=1}^{k} \bigl[f(x_j) - f(x_{j-1})\bigr].
\]

We estimate $f(x_j) - f(x_{j-1})$ based on the type of $(\Sigma_{z_0}, c, f_{x_1})$-admissible curve lying between $x_{j-1}$ and $x_j$. By Lemma~\ref{lem:change-over-vertical-admissible-segments}, if $x_j$ and $x_{j-1}$ are endpoints of some admissible $\ell(x, y)$, then
\[
    f(x_j) - f(x_{j-1}) \leq z_0\, p_2(x_j - x_{j-1}).
\]
On the other hand, if $x_j = G(x)$ and $x_{j-1} = x$ for some $x \in \Omega_{z_0, i}$, then
\[
    f(x_j) - f(x_{j-1}) \leq \Vosc{\Omega_{z_0, i}} S_{z_0} + z_0\, p_2(x_j - x_{j-1}).
\]
Next, if $x_j \in J_{z_0, i}$ and $x_{j-1} = q_{z_0, i} \in J_{z_0, i} \cap (\partial K)$ such that $f_{x_1}(y) = 0$ for some $y \in J_{z_0, i}$ which lies between $x_j$ and $q_{z_0, i}$, then
\[
    f(x_j) - f(x_{j-1})
    \leq \sup_{i \in \mathcal{I}_{z_0}} \bigl[\sup_{J_{z_0, i}} S_{z_0} - S_{z_0}(q_{z_0, i})\bigr]
        + z_0\, p_2(x_j - x_{j-1}).
\]
Finally, if $x_j, x_{j+2}, \ldots, x_{j+2l} \in J_{z_0, i}$ and $x_{j-1} = \hat{G}(\hat{H}(x_j)),\, \ldots,\, x_{j+2l-1} = \hat{G}(\hat{H}(x_{j+2l}))$, and $f_{x_1}$ does not vanish on $J_{z_0, i}$, then
\begin{multline*}
    \sum_{t=1}^{l} \bigl[f(x_{j+2t}) - f(x_{j+2t-1})\bigr] \\
    \leq \sup_{\mathcal{P}_{z_0, i}} \sum_{j} \bigl[S_{z_0}(\gamma_i(s_j)) - S_{z_0}(\gamma_i(t_j))\bigr]
        + z_0 \sum_{t=1}^{l} p_2(x_{j+2t} - x_{j+2t-1}).
\end{multline*}

By Property~(2) of Lemma~\ref{lem:path-existence-boundary}, the path $\Gamma$ visits any given connected component at most once; hence upon summing in $i$:
\[
    f(x^*) - f(q) \leq \Phi_1(z_0) + \Phi_2(z_0) + \Phi_3(z_0) + z_0 \sum_{j=1}^{k} p_2(x_j - x_{j-1})
    = \phi(z_0) + z_0\, p_2(x^* - q).
\]
Hence the total change in $f$ over $\Gamma$ is at most $\phi(z_0) + \abs{z_0}\, \diam(K)$. Since $\abs{z_0} \leq a/\lambda$ and $\phi(z_0) < \lambda$,
\[
    \sup_K f - f(q)
    = \int_{0}^{\Haus{1}(\Gamma)} \dv{s} f(\Gamma(s)) \d \Haus{1}
    \leq (a/\lambda)\, \diam(K) + \lambda.
\]
Likewise, there exists $q' \in \partial K$ such that $f(q') - \inf_K f \leq (a/\lambda)\, \diam(K) + \lambda$, so
\[
    \Osc{K} f - \Osc{\partial K} f \leq 2(a/\lambda)\, \diam(K) + 2\lambda.
\]
We finish the argument exactly as in the compactly supported case, obtaining
\[
    \bigl(\Osc{K} f - \Osc{\partial K} f\bigr)^2 \leq C_K \int_{K} \abs{\det D^2 f(x)} \d x,
\]
where $C_K = 16\, \diam(K)^2$.
\end{proof}

\section{Discussion about ABP for \texorpdfstring{$n \geq 3$}{n>=3}}

Viterbo's proof of Theorem~\ref{thm:Viterbo-ABP} works for compactly supported $C^2$ functions in any dimension $n \geq 2$. However, our proof of Theorems~\ref{thm:ABP-n2-compact} and~\ref{thm:ABP-n2-boundary} are for $n = 2$ only. A classical analysis proof of ABP using Viterbo's approach (no contact sets, no convexity, and avoiding direct reference to symplectic geometry) may also be possible for $n \geq 3$. To see what steps may be required, for concreteness let us focus on the $n = 3$ case.

\begin{theorem}[ABP for $n=3$, compactly supported case]\label{thm:ABP-n3}
    Let $K$ be a compact subset of $\R^3$ with nonempty interior. There exists a constant $C_K$ depending on the cube of the diameter of $K$ such that, for all $f \in C^2(\R^3)$ with $\supp f \subseteq K$,
    \[
        \bigl(\Osc{K} f\bigr)^3 \leq C_K \int_{K} \abs{\det D^2 f(x)} \d x.
    \]
\end{theorem}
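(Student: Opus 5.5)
We mimic the three boxed steps of the proof of Theorem~\ref{thm:ABP-n2-compact}, replacing the one-parameter family of level curves by a two-parameter family. By the approximation of Step~0 we assume $f \in C^\infty(\R^3)$. Consider the map $F \coloneqq (f_{x_2}, f_{x_3}) \colon \R^3 \to \R^2$. For $w = (w_2, w_3) \in \R^2$, set $\Sigma_w \coloneqq F^{-1}(w)$. For a.e.\ $w$ (Sard), $\Sigma_w$ is a finite disjoint union of smooth closed curves in $K$. In the compactly supported case there are no arcs meeting $\partial K$. Let $J_F$ be the $2$-Jacobian of $F$. The vector $X \coloneqq \nabla f_{x_2} \times \nabla f_{x_3}$ is tangent to $\Sigma_w$ and $\abs{X} = J_F$. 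Expanding $\det D^2 f$ along the first row gives $\det D^2 f = \nabla f_{x_1} \cdot X$. The co-area formula for $F$ therefore yields the analogue of \eqref{ineq:Step1}:
\[
    \int_K \abs{\det D^2 f} \d x = \int_{\R^2} \TV_w(f_{x_1}) \d w,
\]
where $\TV_w$ is the total variation of $f_{x_1}$ along $\Sigma_w$.

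For Step~2, define the generating function $S_w(x) \coloneqq f(x) - w_2 x_2 - w_3 x_3$. On $\Sigma_w$ we have $\partial_{x_2} S_w = \partial_{x_3} S_w = 0$, so along each loop $dS_w = f_{x_1}\, d(p_1)$. The integration-by-parts argument of Lemma~\ref{lem:ABP-1D-loop} then goes through verbatim in $\R^3$. With vertical oscillation taken over pairs with equal $p_1$, we get
\[
    \Vosc{\Omega} S_w \leq \diam(K) \int_\Omega \abs{\dv{s} f_{x_1}} \d s
\]
for each loop $\Omega$. Summing over the components of $\Sigma_w$ gives $\phi(w) \leq \diam(K)\, \TV_w(f_{x_1})$.

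For Step~3 we aim for
\[
    (\Osc{K} f)^3 \leq C \diam(K) \int_{\R^2} \phi(w) \d w.
\]
Fix $a > \int \phi$ and set $E_\lambda = \set{\phi \geq \lambda}$, so that $\abs{E_\lambda} < a/\lambda$ by Markov's inequality. A disc of radius $r = \sqrt{a/(\pi\lambda)}$ about $0 \in \R^2$ then contains a regular $w_0 \notin E_\lambda$ with $\abs{w_0} \leq r$. Note that $0 = F(x^*)$ at an interior maximum $x^*$. If such a path can be built from $\partial K$ to $x^*$, made of admissible straight segments in the $(x_2, x_3)$-directions along which $\nabla_{(x_2,x_3)} f$ stays on a controlled side of $w_0$, together with arcs of components of $\Sigma_{w_0}$ each visited at most once, then the argument of Lemma~\ref{lem:change-over-vertical-admissible-segments} gives
\[
    \sup f \leq \phi(w_0) + \abs{w_0}\, \diam(K) \leq \lambda + \diam(K)\sqrt{a/\lambda}.
\]
Optimizing in $\lambda$, with $\lambda \sim (a\, \diam(K)^2)^{1/3}$, yields $\Osc{K} f \lesssim (a\, \diam(K)^2)^{1/3}$. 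Cubing and inserting Step~2 gives the theorem with $C_K = C\, \diam(K)^3$.

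The main obstacle is the three-dimensional analogue of Lemma~\ref{lem:path-existence}. In the plane, the complement of a single level set was two-coloured, and the sign of $f_{x_2} - z_0$ controlled monotonicity along vertical segments. Now $\Sigma_{w_0}$ has codimension two and does not separate $K$. The set where $\nabla_{(x_2,x_3)} f - w_0$ points into a good half-plane is bounded by a surface, not by $\Sigma_{w_0}$ itself. I would first try choosing a direction $\theta \in S^1$ and using the surface $\set{\theta \cdot (\nabla_{(x_2,x_3)} f - w_0) = 0}$ as the separating set, playing the role of $\Sigma_{z_0}$. Transverse flows within that surface would be used to reach a component of $\Sigma_{w_0}$. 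Proving that the resulting path meets each loop only once, by a topological argument in dimension three, is where I expect the real difficulty, and I do not expect it to be as elementary as the planar appendix.
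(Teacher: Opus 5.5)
Your Steps~1 and~2 are correct. Co-area for $F=(f_{x_2},f_{x_3})$, with $J_F=\abs{\nabla f_{x_2}\times\nabla f_{x_3}}$ and $\det D^2f=\nabla f_{x_1}\cdot X$, gives $\int_K\abs{\det D^2 f}\,\mathrm{d}x=\int_{\R^2}\TV_w(f_{x_1})\,\mathrm{d}w$. Since $\mathrm{d}S_w=f_{x_1}\,\mathrm{d}p_1$ along $\Sigma_w$, the proof of Lemma~\ref{lem:ABP-1D-loop} applies verbatim. (Your displayed target for Step~3 should carry $\diam(K)^2$; that is what your optimization gives and what scaling requires.)

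The gap is Step~3. The existence of $\Gamma$ is assumed, and it is the whole difficulty. Your segments keep $x_1$ fixed, and your loop arcs must join points with equal $p_1$ for $\phi(w_0)$ to pay for them. So every breakpoint of $\Gamma$ lies in $P=\set{x_1=p_1(x^*)}$, and the question is planar. For $h\coloneqq S_{w_0}|_P$, segments may only be traversed with $h$ non-increasing, the critical points of $h$ are exactly $\Sigma_{w_0}\cap P$, and a paid jump is allowed only between two such points on the same loop. Because $X_1=\det D^2_{x'x'}f$ alternates in sign at consecutive transverse crossings, each loop meets $P$ in equally many extrema and saddles of $h$. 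This does not prevent traps.

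Let $h$ be volcano-shaped: rim maximum $a$, rim saddle $s$, crater minimum $b$, outer saddle $s'$, as for a radial crater bump minus $w_0\cdot x'$. Take $h(b)$, $h(s')$ and $\max_{\partial K\cap P}h$ all below $h(s)\le h(x^*)$, and suppose $\set{a,s}=\Omega\cap P$ for a single loop $\Omega$. Read backwards from $x^*$, every admissible move keeps you in $\set{h\ge h(s)}$. That set meets $\Sigma_{w_0}\cap P$ only in $\set{a,s}$ and misses $\partial K$, so no admissible $\Gamma$ exists, whatever the value of $\phi(w_0)$. For $f=\chi(x_1)\cdot(\text{volcano})$ the loops instead pair $a$ with $s'$ and $b$ with $s$, and the path does exist. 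A three-dimensional path lemma would therefore have to show that loops of $\Sigma_{w_0}$ cannot produce the first pairing, or find another way to pay for leaving such a trap. Your proposal has no such ingredient, and it is not clear that the lemma holds as stated.

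Your suggested remedy does not escape this. Moving inside $\set{\theta\cdot(\nabla_{x'}f-w_0)=0}$ away from $\Sigma_{w_0}$ changes $S_{w_0}$ by amounts that $\phi(w_0)$ does not control, since $\phi(w_0)$ only sees the loops of $\Sigma_{w_0}$. For $\theta=e_3$ this surface is the paper's $\Sigma_z$ with $z=w_{0,3}$. Controlling $S$ along its arcs is a two-dimensional ABP problem on a level surface, of the type of the paper's open inequality~\eqref{ineq:Step2-n=3}.

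That is also the right comparison with the paper, which does not prove this theorem either. It slices by the scalar $f_{x_3}$, so the level sets are separating orientable surfaces, and (as the paper asserts) the coloring and path argument of Step~3 carries over; Step~2 is left as a conjecture. Your vector slicing makes Step~2 rigorous and moves all of the difficulty into the path construction. The two open steps are related, since your curves $\Sigma_w$ are the level curves of $f_{x_2}$ on the paper's surfaces $\Sigma_{w_3}$. (The inequality itself holds by Lemma~\ref{lem:ABP-analytic} applied to $\pm f$ on a ball containing $K$; what is missing, in both routes, is a proof along Viterbo's lines.)
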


\subsection{Remarks about \texorpdfstring{$n=3$}{n=3}}

\noindent\textbf{Step 1.}\quad
In order to apply the co-area formula, we need an appropriate factorization of the determinant in the $n = 3$ case. One such factorization is through the triple scalar product:
\begin{equation*}\label{eq:triple-scalar-product-det}
    \det(D^2 f) = \det\bigl[\nabla f_{x_1},\, \nabla f_{x_2},\, \nabla f_{x_3}\bigr]
    = (\nabla f_{x_1} \times \nabla f_{x_2}) \cdot \nabla f_{x_3}.
\end{equation*}
Applying co-area to the mapping $f_{x_3}$ yields\footnote{We used the fact that $A_z(\nabla f_{x_1}, \nabla f_{x_2}) = A_z(\tgrad f_{x_1}, \tgrad f_{x_2})$.}
\begin{equation}\label{eq:coarea-n=3}
    \int_{K} \abs{\det D^2 f(x)} \d x
    = \int_{-\infty}^{\infty} \int_{\Sigma_z} \abs{A_z(\tgrad f_{x_1}, \tgrad f_{x_2})} \d \Haus{2} \d z,
\end{equation}
where $\Sigma_z \coloneqq f_{x_3}^{-1}(z)$, $\vec{N} \coloneqq \nabla f_{x_3} / \abs{\nabla f_{x_3}}$, $\tgrad \coloneqq (I - \vec{N} \otimes \vec{N}) \nabla$ is the gradient intrinsic to $\Sigma_z$, and $A_z(v, w) \coloneqq (v \times w) \cdot \vec{N}$ is the area form on the surface $\Sigma_z$.

An alternative factorization of $\abs{\det D^2 f}$ is based on analogy with the right-hand side of \cite[Theorem 3.10]{viterbo2000metric}. We show in the Appendix that \eqref{eq:coarea-n=3} is, at least formally, equivalent to
\begin{multline}\label{eq:coarea-n=3-alternate}
    \int_{K} \abs{\det D^2 f(x)} \d x \\
    = \int_{-\infty}^{\infty} \int_{\Sigma_z} \abs*{\det\bigl(D^2_{x' x'} f - \nabla f_{x_3} \otimes \nabla f_{x_3} / f_{x_3 x_3}\bigr)}(x', x_3(x')) \d x' \d z,
\end{multline}
where $x' \coloneqq (x_1, x_2)$ and $(x', x_3(x'))$ locally parametrizes $\Sigma_z$.

\medskip\noindent\textbf{Step 2.}\quad
For any regular value $z \in \Zhat$, by the compact support assumption there exist integers $N_z > 0$ and $m_{z,g} \geq 1$ such that
\[
    \Sigma_z = \bigsqcup_{g=0}^{N_z} \bigsqcup_{i=1}^{m_{z, g}} \Omega_{z, g, i},
\]
where each $\Omega_{z, g, i}$ is a surface of genus $g$. Let $S_z(x) \coloneqq f(x) - z\, p_3(x)$ for $x \in K$. Motivated by the $n = 2$ proof and the validity of Step~3 below, define
\[
    \phi(z) \coloneqq
    \begin{cases}
        \displaystyle \sum_{g=0}^{N_z} \sum_{i=1}^{m_{z, g}} \Vosc{\Omega_{z, g, i}} S_z & \text{if } z \in \Zhat, \\[4pt]
        0 & \text{otherwise.}
    \end{cases}
\]
Based on \cite[Theorem 3.10]{viterbo2000metric} and the formal equivalence between \eqref{eq:coarea-n=3} and \eqref{eq:coarea-n=3-alternate}, we conjecture that for all $z \in \R$,
\begin{equation}\label{ineq:Step2-n=3}
    \phi(z)^2 \lesssim \,\diam(K)^2 \int_{\Sigma_z} \abs{A_z(\tgrad f_{x_1}, \tgrad f_{x_2})} \d \Haus{2}.
\end{equation}
We currently do not have a general working proof of \eqref{ineq:Step2-n=3}; see the concluding remarks.

\medskip\noindent\textbf{Step 3.}\quad
We show that
\[\boxed{
    \bigl(\Osc{K} f\bigr)^3 \lesssim \, \diam(K) \int_{-\infty}^{\infty} \phi(z)^2 \d z.}
\]
We do have a proof of this step, which we believe is the analogue of the inequality
\begin{equation}\label{eq:induction-viterbo-step}
    \overline{d}(L)^{k+1} \leq C_{k+1} \int_{\R} \overline{d}(L_x)^k \d x,
\end{equation}
appearing in \cite[Theorem 3.8]{viterbo2000metric}. The idea is to follow the proof of Step~3 in the $n = 2$ case, but to take $a > \int_{-\infty}^{\infty} \phi(z)^2 \d z$ and replace $E_\lambda$ with the set $\set{z \in \Zhat \given \phi(z)^2 \geq \lambda}$. Since surfaces of genus $g$ are orientable, there exists a coloring of $K \setminus \Sigma_z$ that alternates between $\pm 1$ across each $\Omega_{z, g, i}$. Since each $\Omega_{z, g, i}$ is path-connected, we can then construct an admissible path $\Gamma$ from $\partial K$ to the point of maximum $x^*$ of $f$ on $K$, consisting of vertical segments and arcs of components $\Omega_{z, g, i}$ similarly to Lemma~\ref{lem:path-existence}. It follows that
\[
    \Osc{K} f \leq 2(a/\lambda)\, \diam(K) + 2\sqrt{\lambda}.
\]
Minimizing in $\lambda$ yields
\begin{equation}
    \Osc{K} f \leq 3\bigl(2 a\, \diam(K)\bigr)^{1/3},
\end{equation}
and the proof concludes in the same fashion as the $n = 2$ case.

\subsection{Concluding remarks}

Since Steps~1 and 3 go through for $n = 3$, the main obstacle to a classical analysis proof of ABP in the $n = 3$ case seems to be inequality~\eqref{ineq:Step2-n=3} of Step~2, which is a higher-dimensional version of Lemma~\ref{lem:ABP-1D-loop}. An explicit analysis proof of this step would be very interesting. However, it could be that $\phi(z)$ is not the correct quantity to consider, or perhaps such a simple adaptation of the $n = 2$ case is not possible due to the increased topological complexity arising from level sets of a function of three or more variables.

\section*{Acknowledgments}

We thank C.~Viterbo for helpful email correspondences, which assisted us in understanding his paper.

\appendix

\section{Proofs of Lemmas~\ref{lem:path-existence} and~\ref{lem:path-existence-boundary}}

\begin{proof}[Proof of Lemma~\ref{lem:path-existence}]
    Let $D_0$ be the connected component of $K \setminus \Sigma$ containing $x^*$, and let $E_0^\infty$ be the unbounded component of $\R^2 \setminus \overline{D_0}$. First we claim that there exists an admissible path $\Gamma_0(x^*)$ contained in $\overline{D_0}$ starting at some point $q_1 \in \partial(E_0^\infty) \cup \partial K$ and ending at $q_0 \coloneqq x^*$.

    For convenience, define $\alpha * \varnothing \coloneqq \varnothing * \alpha \coloneqq \alpha$, and let $\mathcal{S}$ be the class of piecewise-smooth paths in $\overline{D_0}$. Define $\Gamma_0 \colon \overline{D_0} \to \mathcal{S}$ recursively by
    \[
        \Gamma_0(x) \coloneqq
        \begin{cases}
            \varnothing & \text{if } x \in \partial(E_0^\infty) \cup \partial K, \\
            \Gamma_0(\Xi(x)) * \ell(\Xi(x), x) & \text{if } \#A(x) \geq 1, \\
            \Gamma_0(G(x)) * \gamma(G(x), x) & \text{if } \#A(x) = 0,
        \end{cases}
    \]
    where $\Xi$ is a mapping which we presently describe.

    For each $y \in \overline{D_0}$, define
    \[
        A(y) \coloneqq \set{\xi \in (\partial D_0) \cup (\partial K) \given \ell(\xi, y) \text{ is } (\Sigma, c)\text{-admissible}}.
    \]
    Now suppose $y \in \overline{D_0}$. There are three possibilities: $\#A(y) = 0$, $1$, or $2$. We observe that $\#A(y) \in \set{0, 2} \implies y \in \Sigma$. (If $y \in \overline{D_0} \setminus \Sigma$, then $c$ is constant on the segment $\ell = K \cap (\set{p_1(y)} \times (-\epsilon, \epsilon))$ for small $\epsilon > 0$, and hence there exists exactly one admissible direction to traverse $\ell$: down if $\ell \subseteq c^{-1}(+1)$, and up if $\ell \subseteq c^{-1}(-1)$. We may extend $\ell$ to the first point of intersection with $(\partial D_0) \cup (\partial K)$, showing $\#A(y) = 1$.) As a corollary, $G(y)$ is defined if $\#A(y) = 0$, in which case $\#A(G(y)) = 2$ by definition of $G$.

    If $\#A(y) = 2$, then $A(y) = \set{\xi^+, \xi^-}$ with $\xi^- \prec y \prec \xi^+$; moreover $y \in \Sigma$. Since $\partial D_0 \subseteq \Sigma$, $y \in \partial D_0$, and there exists exactly one $\xi(y) \in \set{\xi^+, \xi^-}$ such that $\ell(\xi(y), y)^\circ \subseteq D_0$. Define $\Xi \colon \set{y \in \overline{D_0} \given \#A(y) \geq 1} \to (\partial D_0) \cup (\partial K)$ by
    \[
        \Xi(y) \coloneqq
        \begin{cases}
            \text{the unique } \xi \in A(y) & \text{if } \#A(y) = 1, \\
            \xi(y) & \text{if } \#A(y) = 2.
        \end{cases}
    \]

    Now we show that $\Gamma_0(x^*)$ fulfills the requirements of the lemma. By construction, every open segment $\ell(\Xi(x), x)^\circ$ is a subset of $D_0$, and $c$ is constant on $D_0$; admissibility forces all these segments to be traversed in a \emph{uniform} direction (either all up or all down). This uniformity together with the definition of $G$ ensures that each $\Omega_i$ is visited by $\Gamma_0(x^*)$ at most once, proving Property~(2). Finally, since there are only finitely many $\Omega_i$, the recursion terminates at some point $q_1$ after finitely many steps, and by definition of $\Gamma_0$, $q_1 \in \partial(E_0^\infty) \cup \partial K$.

    If $q_1 \in \partial K$, we are done. Otherwise, let $D_1$ be the connected component of $K \setminus (\Sigma \cup \overline{D_0})$ such that $q_1 \in \overline{D_1}$. Repeat the process: there exists an admissible path $\Gamma_1(q_1)$ contained in $\overline{D_1}$ starting at some point $q_2 \in \partial(E_1^\infty) \cup \partial K$ and ending at $q_1$, where $E_1^\infty$ is the unbounded component of $\R^2 \setminus \overline{D_1}$. Continuing in this way, and using the fact that there are only finitely many $\Omega_i$, the process must eventually terminate with some admissible path $\Gamma_n(q_n)$ from $q_{n+1} \in \partial K$ to $q_n \in \overline{D_n}$. Then we take
    \[
        \Gamma \coloneqq \Gamma_n(q_n) * \Gamma_{n-1}(q_{n-1}) * \cdots * \Gamma_0(x^*).
    \]
    It is immediate that $\Gamma$ fulfills Property~(1). Property~(2) holds because it holds separately on each $\Gamma_i(q_i)$, $1 \leq i \leq n$, and by the observation that $\Gamma_i(q_i) \cap \Gamma_j(q_j) = \varnothing$ if $\abs{i - j} \geq 2$, while $\Gamma_{i+1}(q_{i+1}) \cap \Gamma_i(q_i) = \set{q_{i+1}}$.
\end{proof}
 \begin{figure}[h!]
	\centering
    \includegraphics[width=.7\textwidth, angle=0]{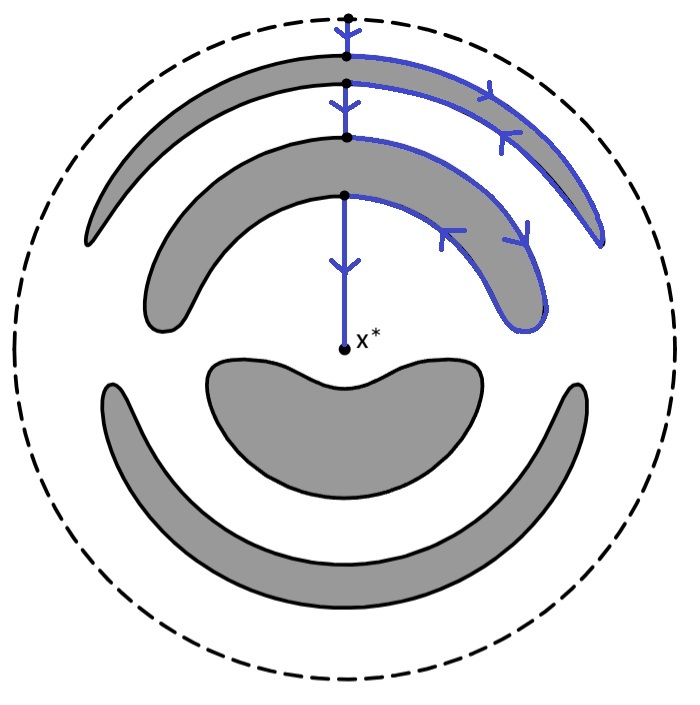}
	\caption{Example path constructed in Lemma \ref{lem:path-existence} from $\partial K$ to $x^*$ where $c = c_{0,x^*}$. The boundary of $K$ is the dashed outer circle, and the other loops are connected components of $\Sigma$. The shaded region corresponds to $c^{-1}(-1)$, and the white region corresponds to $c^{-1}(+1)$. Here, we have $\Gamma = \Gamma_0(x^*) = \ell(q_1,G(\xi_2))*\gamma(G(\xi_2), \xi_2)*\ell(\xi_2, G(\xi_1))*\gamma(G(\xi_1), \xi_1)*\ell(\xi_1,x^*)$, where  $\xi_1 = \Xi(x^*), \xi_2 = \Xi(G(\xi_1)),$ and $q_1 = \Xi(G(\xi_2)) \in \partial K$.}
\end{figure}

 \begin{figure}[h!]
	\centering
    \includegraphics[width=.7\textwidth, angle=0]{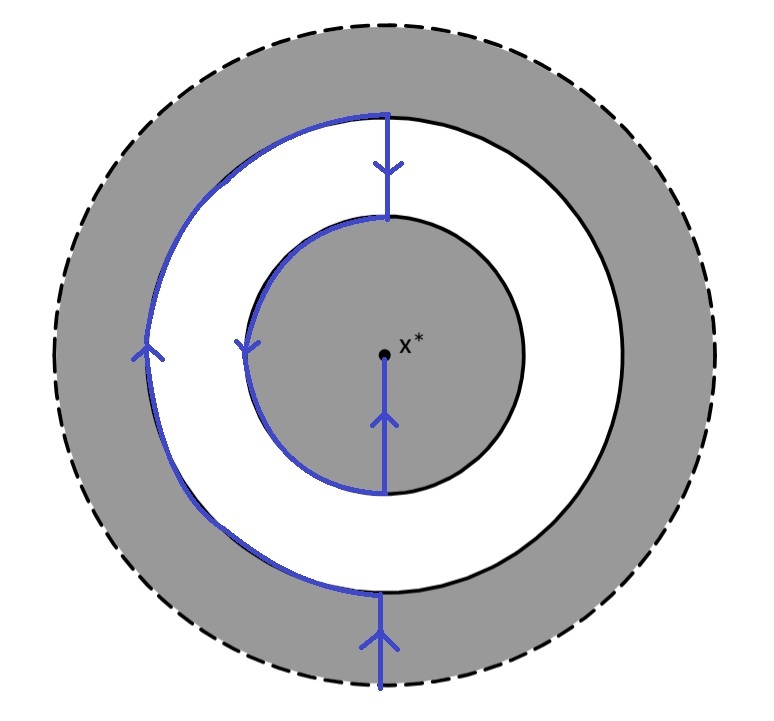}
	\caption{Example path constructed in Lemma \ref{lem:path-existence} from $\partial K$ to $x^*$ where $c = c_{1,x^*}$. In this example, we have $\Gamma = \Gamma_2(q_2)*\Gamma_1(q_1)*\Gamma_0(x^*),$ where $\Gamma_2(q_2) = \ell(q_3,G(q_2))*\gamma(G(q_2), q_2),$ $\Gamma_1(q_1) = \ell(q_2, G(q_1))*\gamma(G(q_1), q_1)$, and $\Gamma_0(x^*) = \ell(q_1,x^*)$, where  $q_i \in \partial(E_{i-1}^\infty)$ and $q_3 \in \partial K$.}
\end{figure}
\begin{proof}[Proof of Lemma~\ref{lem:path-existence-boundary}]
    The proof is similar to that of Lemma~\ref{lem:path-existence}. We begin the same way by letting $D_0$ be the connected component of $K \setminus \Sigma$ containing $x^*$. Then we construct a path $\Gamma_0(x^*)$ contained in $\overline{D_0}$ from some point $q_1 \in \partial(E_0^\infty) \cup (\partial K)$ to $x^*$. We construct this path recursively as a function $\Gamma_0 \colon \overline{D_0} \to \mathcal{S}$ defined by the following six cases:
    \begin{enumerate}[label=(\arabic*)]
        \item $\Gamma_0(x) \coloneqq \varnothing$ if $x \in \partial K$;
        \item $\Gamma_0(x) \coloneqq \Gamma_0(\Xi(x)) * \ell(\Xi(x), x)$ if $\#A(x) \geq 1$;
        \item $\Gamma_0(x) \coloneqq \Gamma_0(G(x)) * \gamma(G(x), x)$ if $\#A(x) = 0$ and $x \in \Omega_i$ for some $i$;
        \item $\Gamma_0(x) \coloneqq \Gamma_0(\hat{G}(x)) * \gamma(\hat{G}(x), x)$ if $\#A(x) = 0$ and $x \in J_i$ for some $i$;
        \item $\Gamma_0(x) \coloneqq \gamma(q, x)$ if $x \in J_i$ for some $i$ such that there exists $q \in J_i \cap (\partial K)$ with $F(y) = 0$ for some $y \in J_i$ lying between $x$ and $q$;
        \item $\Gamma_0(x) \coloneqq \Gamma_0(\hat{H}(x)) * \gamma(\hat{H}(x), x)$ if $x \in J_i$ for some $i$ such that $F$ does not vanish on $J_i$, and $\gamma(\hat{H}(x), x)$ is parametrized so that $F(\gamma(s))\, p_1(\gamma'(s)) \leq 0$.
    \end{enumerate}
    As in Lemma~\ref{lem:path-existence}, we obtain a sequence of paths $\Gamma_i(q_i)$ contained in components $\overline{D_i}$, and finally we set
    \[
        \Gamma \coloneqq \Gamma_n(q_n) * \Gamma_{n-1}(q_{n-1}) * \cdots * \Gamma_0(x^*). \qedhere
    \]

 \begin{figure}[h!]
	\centering
    \includegraphics[width=.8\textwidth, angle=0]{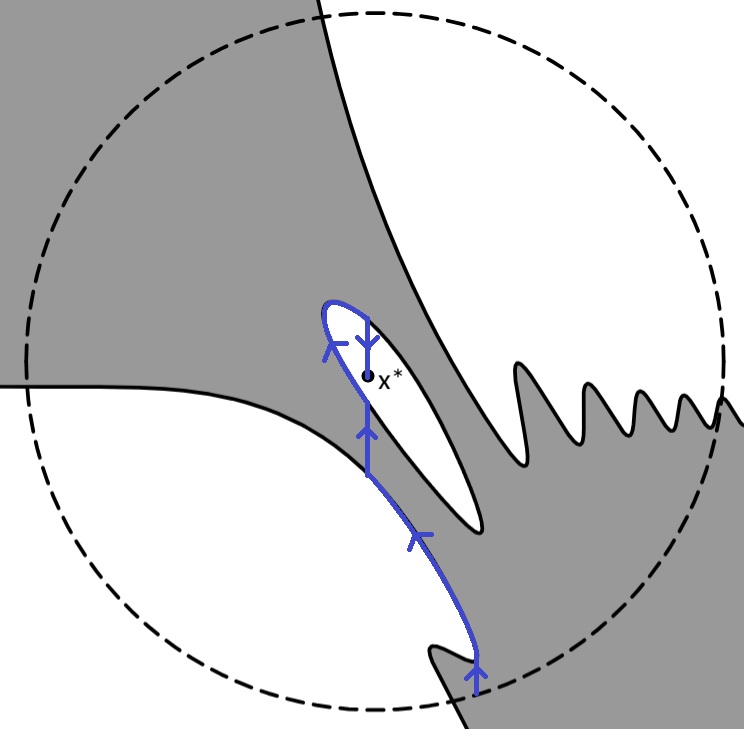}
	\caption{Example path constructed in Lemma \ref{lem:path-existence-boundary} from $\partial K$ to $x^*$ where $c = c_{0,x^*}$. The boundary of $K$ is the dashed outer circle, and the other lines are connected components of $\Sigma$ of type $\Omega_i$ or $J_i$. The shaded region corresponds to $c^{-1}(-1)$, and the white region corresponds to $c^{-1}(+1)$. In this example, we suppose that $F \neq 0$ on the lower-most curve, and that the admissible choice of parametrization is right to left. We have $\Gamma =\ell(q_3,\hat{H}(q_2) )*\gamma(\hat{H}(q_2),q_2)*\ell(q_2, G(q_1))*\gamma(G(q_1),q_1)*\ell(q_1, x^*)$. This picture was generated by plotting the zero level set of the function $2\cos(0.6(0.5 x^3 - y^2)) - (y+4)(x^2+y-4)$ and taking $x^* = (2.54, -3.62)$ and $K = \overline{B(x^*, 2.6)}$.}
\end{figure}

  \begin{figure}[h!]
	\centering
    \includegraphics[width=.8\textwidth, angle=0]{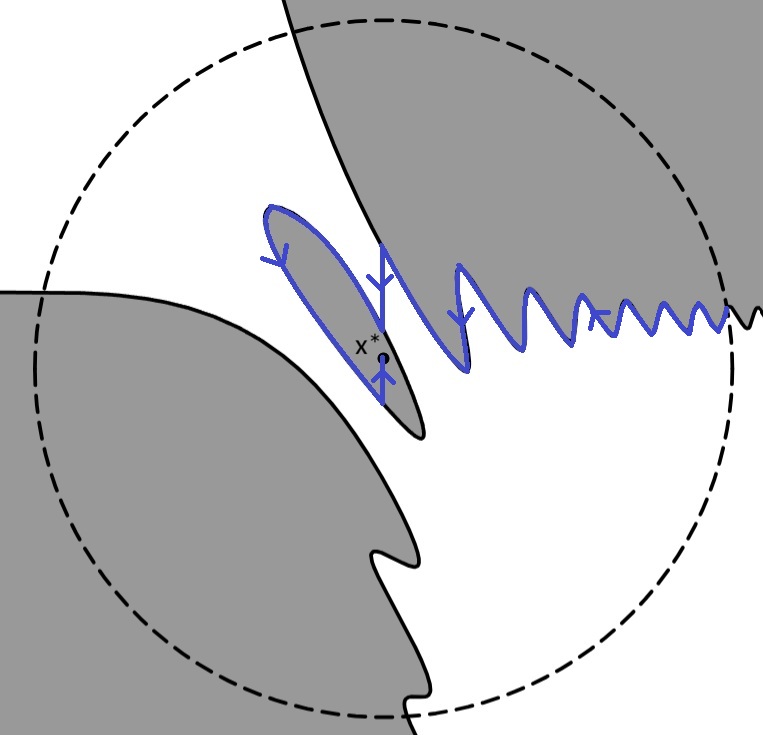}
	\caption{Example path constructed in Lemma \ref{lem:path-existence-boundary} from $\partial K$ to $x^*$ where $c = c_{1,x^*}$. In this example, we suppose that $F= 0$ at some point  on the upper curve near the right-most boundary of $K$. We have $\Gamma = \gamma(q, q_2)*\ell(q_2,G(q_1))*\gamma(G(q_1),q_1)*\ell(q_1, x^*)$ for $q \in \partial K$. This picture was generated by plotting the zero level set of the function $2\cos(0.6(0.5 x^3 - y^2)) - (y+4)(x^2+y-4)$ and taking $x^* = (3.04, -4.38)$ and $K = \overline{B(x^*, 2.6)}$.}
\end{figure}
\end{proof}
\section{Formal equivalence of \texorpdfstring{\eqref{eq:coarea-n=3}}{(coarea-n=3)} and \texorpdfstring{\eqref{eq:coarea-n=3-alternate}}{(coarea-n=3-alternate)}}

By the implicit function theorem, on any coordinate patch of $\Sigma_z$ which is a function of $x'$, we have $f_{x_3 x_3} \neq 0$ and
\[
    \d \Haus{2}(x') = \frac{\abs{\nabla f_{x_3}}}{\abs{f_{x_3 x_3}}} \d x'.
\]
Now consider a block matrix
\[
    M = \begin{bmatrix} A & b \\ b^\top & c \end{bmatrix},
\]
where $A$ is $(n-1) \times (n-1)$, $b$ is $(n-1) \times 1$, and $c$ is a scalar. If $c \neq 0$, then the Schur complement of $M$ is $A - b \otimes b / c$, which we denote $M / c$. It is well-known that $\det(M / c) = \det(M) / c$ whenever $c \neq 0$, by the one-line computation
\begin{align*}
    \det(M)
    &= \det\!\left(\begin{bmatrix} A & b \\ b^\top & c \end{bmatrix} \begin{bmatrix} I & 0 \\ -b^\top / c & 1 \end{bmatrix}\right) \\
    &= \det\!\left(\begin{bmatrix} M / c & b \\ 0 & c \end{bmatrix}\right)
    = c \cdot \det(M / c).
\end{align*}
Therefore, locally,
\begin{multline*}
    \frac{\abs{\det D^2 f}}{\abs{\nabla f_{x_3}}} \d \Haus{2}
    = \frac{\abs{\det D^2 f}}{\abs{f_{x_3 x_3}}} \d x' \\
    = \abs*{\det\bigl(D^2_{x' x'} f - \nabla_{x'} f_{x_3} \otimes \nabla_{x'} f_{x_3} / f_{x_3 x_3}\bigr)} \d x'.
\end{multline*}

\bibliographystyle{alpha}
\bibliography{sources}

\end{document}